\newtheorem{theorem}[equation]{Theorem}
\newtheorem{proposition}[equation]{Proposition}
\newtheorem*{conj*}{Conjecture}
\newtheorem*{theorem*}{Theorem}
\newtheorem{corollary}[equation]{Corollary}
\newtheorem{conj}[equation]{Conjecture}
\newtheorem{lemma}[equation]{Lemma}
\begin{document}

\title{Uniform bounds of Piltz divisor problem over number fields}
\author{Wataru Takeda}
\address{Department of Mathematics, Nagoya University, Chikusa-ku, Nagoya 464-8602,
Japan}
\email{d18002r@math.nagoya-u.ac.jp}
\subjclass[2010]{11N45 (primary),11R42, 11H06,11P21 (secondary)}
\keywords{ideal counting function, exponential sum, Piltz divisor problem}

\begin{abstract}
We consider the upper bound of Piltz divisor problem over number fields. Piltz divisor problem is known as a generalization of the Dirichlet divisor problem. We deal with this problem over number fields and improve the error term of this function for many cases. Our proof uses the estimate of exponential sums. We also show uniform results for ideal counting function and relatively $r$-prime lattice points as one of applications.
\end{abstract}

\maketitle
\section{Introduction}
The behavior of arithmetic functions has long been studied and it is one of the most important research in analytic number theory.
But many arithmetic functions $f(n)$ fluctuate as $n$ increases and it becomes difficult to deal with them. Thus many authors study partial sums $\sum_{n\le x}f(n)$ to obtain some information about arithmetic functions $f(n)$.
In this paper we consider Piltz divisor function $I_K^m(x)$ over number field.
Let $K$ be a number field with extension degree $[K:\mathbf Q]=n$ and let $\mathcal{O}_K$ be its ring of integers. Let $D_K$ be absolute value of the discriminant of $K$.
Then Piltz divisor function $I_K^m(x)$ counts the number of $m$-tuples of ideals $(\mathfrak{a}_1, \mathfrak{a}_2,\ldots,\mathfrak{a}_m)$ such that product of their ideal norm $\mathfrak{Na}_1\cdots\mathfrak{Na}_m\le x$.
It is known that \begin{equation} \label{res}I_K^m(x)\sim \underset{s=1}{Res}\ \left(\zeta_K(s)^m\frac {{x^s}}s\right).\end{equation}
We denote $\Delta_K^m(x)$ be the error term of $I_K^m(x)$, that is, $I_K^m(x)- \underset{s=1}{Res}\ \left(\zeta_K(s)^m \frac {{x^s}}s\right)$.

The case of $m=1$ this function is the ordinary ideal counting function over $K$. For simplicity we substitute $I_K(x)$ and $\Delta_K(x)$ for $I_K^1(x)$ and $\Delta_K^1(x)$ respectively. 
There are many results about $I_K(x)$ from 1900's. In the case $K=\mathbf Q$, integer ideals of $\mathbf Z$ and positive integers are in one-to-one correspondence, so $I_{\mathbf Q}(x)=[x]$, where $[\cdot]$ is the Gauss symbol. 
For the general case, the best estimate of $\Delta_K(x)$ hitherto is the following theorem:
\begin{theorem}
\label{idealhi}
The following estimates hold. For all $\varepsilon>0$
\begin{center}
\begin{tabular}{cll}
$n=[K:\mathbf Q]$&$\Delta_K(x)$&\rule[-2mm]{0mm}{6mm}\\
\hline
$2$&$O\left(x^{\frac{131}{416}}\left(\log x\right)^{\frac{18627}{8320}}\right)$&Huxley.  \cite{Hu00}\rule[-2mm]{0mm}{6mm}\\
$3$&$O\left(x^{\frac{43}{96}+\varepsilon}\right)$&M\"uller. \cite{mu88}\rule[-2mm]{0mm}{6mm}\\
$4$&$O\left(x^{\frac{41}{72}+\varepsilon}\right)$&Bordell\`es.  \cite{bo15}\rule[-2mm]{0mm}{6mm}\\
$5\le n\le10$&$O\left(x^{1-\frac4{2n+1}+\varepsilon}\right)$&Bordell\`es. \cite{bo15}\rule[-2mm]{0mm}{6mm}\\
$11\le n$&$O\left(x^{1-\frac3{n+6}+\varepsilon}\right)$&Lao. \cite{La10}\rule[-2mm]{0mm}{6mm}\\
\end{tabular}
\end{center}
\end{theorem}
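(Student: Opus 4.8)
Since Theorem~\ref{idealhi} collects the sharpest bounds currently known for each small degree $n$, my plan is to exhibit the single analytic mechanism that underlies all five entries and then to indicate which exponential-sum input is fed into it in each range of $n$. The starting point is to write $I_K(x)=\sum_{\nu\le x}a_K(\nu)$, where $a_K(\nu)$ is the number of integral ideals of norm $\nu$, and to recover $\Delta_K(x)$ from a truncated Perron integral of $\zeta_K(s)x^s/s$. Shifting the contour across the line $\mathrm{Re}(s)=1$ picks up the residue at $s=1$ that produces the main term in~\eqref{res}, and the functional equation for $\zeta_K(s)$ converts the remaining integral into a Voronoi-type series in which $\Delta_K(x)$ is expressed as a weighted sum $\sum_{\nu} a_K(\nu)\,e\!\left(c\,(\nu x)^{1/n}\right)$ against an oscillating Bessel-type kernel. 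Thus the whole problem is reduced to estimating exponential sums whose phase is an $n$-th root, and the quality of the final exponent is governed entirely by how well one can bound these sums.

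Second, I would feed the appropriate tool into this reduction degree by degree. For $n=2$ the norm form is a binary quadratic form, so $\Delta_K(x)$ is essentially the lattice-point discrepancy for a dilated ellipse; here the sharpest available input is Huxley's discrete Hardy--Littlewood (Bombieri--Iwaniec--Huxley) method, which yields the exponent $131/416$ together with the indicated power of $\log x$. For $n=3$ and $n=4$ the phases are cube and fourth roots, and one applies the best low-dimensional exponent pairs (as in the work of M\"uller and Bordell\`es) to the truncated Voronoi sums. For the ranges $5\le n\le 10$ and $n\ge 11$ the exponents degrade towards $1$, and the cleanest route is to bound the sums by a single well-chosen exponent pair: optimising over the classical van der Corput pairs gives Bordell\`es's exponent $1-4/(2n+1)$, which beats Landau's baseline $(n-1)/(n+1)=1-2/(n+1)$, while for $n\ge 11$ a different admissible pair gives Lao's $1-3/(n+6)$, the two crossing over exactly at $n=10$.

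The main obstacle throughout is the exponential-sum step, not the analytic bookkeeping. For $n=2$ the difficulty is concentrated in squeezing the last gains out of the Gauss-circle-type discrepancy, which is why the proof must invoke the full strength of the discrete Hardy--Littlewood apparatus rather than a mere exponent pair. For larger $n$ the difficulty is that the available multidimensional exponent pairs are weak, so the exponent unavoidably tends to $1$ and the contest is only over the constant $c$ in $1-c/n$; the task is to select, among admissible $(k,\ell)$, the pair that minimises the resulting exponent in each dimension. I expect the verification of this numerical optimisation --- checking that the stated pair is admissible and that it indeed dominates Landau's bound in the claimed range --- to be the one genuinely delicate point, while the passage from $I_K(x)$ to the exponential sums via Perron's formula and the functional equation is standard.
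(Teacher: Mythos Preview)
Your sketch is a reasonable high-level outline of how the five cited results are obtained, but there is nothing to compare it against: the paper does not prove Theorem~\ref{idealhi}. It is a survey statement collecting the best known bounds from the literature, each attributed to its source (Huxley, M\"uller, Bordell\`es, Lao), and no argument is given or needed. The paper's own contribution is Theorem~\ref{cub} for $mn\ge 4$, which improves the $n=4$ entry; the remaining entries of Theorem~\ref{idealhi} are simply quoted as context.

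If your intent was to supply the missing proofs from the cited papers, then your outline is broadly accurate in spirit --- Perron plus functional equation plus a Voronoi-type expansion, followed by exponential-sum input tailored to each degree --- and this is indeed the mechanism the present paper formalises in Proposition~\ref{ideal} for the Piltz problem. But as written it is a narrative, not a proof: you do not specify which exponent pairs are used, do not carry out the optimisation that produces $41/72$, $1-4/(2n+1)$, or $1-3/(n+6)$, and do not verify the crossover at $n=10$. Each of those requires the full argument of the corresponding cited paper, and none of them is reproduced here or in the paper under review.
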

There are also many results about $I_{\mathbf Q}^m$ from 1800's. In 1849 Dirichlet shows that \[I_{\mathbf Q}^2(x)=x\log x+(2\gamma-1)x+O\left(x^{\frac12}\right),\] where $\gamma$ is the Euler constant, defined by the equation\[\gamma=\lim_{n\rightarrow \infty}\left(\sum_{k=1}^n\frac1k-\log n\right).\]
The $O$-term is improved by many researchers many times, the best estimate hitherto is ${x^{\frac{517}{1648}+\varepsilon}}$ \cite{bw17}.

As we have mentioned above, there exists many results about other divisor problems but it seems that there are not many results about piltz divisor problem over number fields. In 1993, Nowak shows the following theorem:
\begin{theorem}[Nowak \cite{No93}]
\label{no}
When $n=[K:\mathbf{Q}]\ge2$, then we get
\[\Delta_K^m(x)=\left\{
\begin{array}{ll}
O_K\left(x^{1-\frac2{mn}+\frac8{mn(5mn+2)}} (\log x)^{m-1-\frac{10(m-2)}{5n+2}}\right)& \text{ for } 3\le mn\le6,\\
O_K\left(x^{1-\frac2{mn}+\frac3{2m^2n^2}} (\log x)^{m-1-\frac{2(m-2)}{mn}}\right)& \text{ for } mn\ge 7.
\end{array}
\right.\]
\end{theorem}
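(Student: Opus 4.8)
The plan is to attack $\Delta_K^m(x)$ analytically through the Dedekind zeta function, since by definition the Dirichlet series $\sum_N d_K^{(m)}(N)\,N^{-s}$, where $d_K^{(m)}(N)$ counts the $m$-tuples of ideals whose norm-product is $N$, equals $\zeta_K(s)^m$. Although one could try to bootstrap from the single-variable bounds of Theorem~\ref{idealhi} via the $m$-dimensional hyperbola method, that route propagates the idiosyncratic exponents of $\Delta_K$ rather than the clean shape $1-\tfrac{2}{mn}$ sought here, so I would instead treat $\zeta_K(s)^m$ directly. I would first record a truncated Perron formula
\[
I_K^m(x) = \frac{1}{2\pi i}\int_{c-iT}^{c+iT} \zeta_K(s)^m\,\frac{x^s}{s}\,ds + (\text{truncation error}),
\]
with $c = 1 + 1/\log x$ and a free parameter $T$ to be optimized, and then shift the contour to a line $\Re s = \sigma_0 < 1$. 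The residue of the order-$m$ pole at $s=1$ reproduces exactly $\mathrm{Res}_{s=1}\!\left(\zeta_K(s)^m x^s/s\right)$, so that $\Delta_K^m(x)$ is left as the sum of the shifted vertical integral, the two horizontal integrals, and the Perron truncation error. The naive logarithmic power $(\log x)^{m-1}$ arises here from the order of the pole; the exponential-sum savings will later sharpen it to the stated $m-1-\cdots$.

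The analytic heart is a good bound for $\zeta_K(\sigma+it)^m$ on vertical lines. I would start from the functional equation of $\zeta_K$, whose gamma factors have total degree $n=[K:\mathbf{Q}]$ and whose conductor is $D_K$; the Phragm\'en--Lindel\"of principle then yields the convexity estimate $\zeta_K(\sigma+it) \ll_K (|t|+1)^{n(1-\sigma)/2+\varepsilon}$ for $0\le\sigma\le1$. Feeding this into the contour integral and balancing $T$ against $\sigma_0$ already produces an error term of the shape $x^{1-c/(mn)}$, which is the backbone $1-\tfrac{2}{mn}$ of the claimed exponents; the refinements $\tfrac{8}{mn(5mn+2)}$ and $\tfrac{3}{2m^2n^2}$ must come from beating convexity.

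To beat convexity I would pass to van der Corput's method. Via the approximate functional equation, $\zeta_K(\tfrac12+it)$ is reduced to short exponential sums over ideals, which after summing over ideals of a fixed norm become essentially one-dimensional sums with phase governed by $t\log N$ (weighted by the ideal-counting coefficients); applying an exponent pair $(\kappa,\lambda)$, or in the higher moments the mean-value theorem for such sums, gives a pointwise bound $\zeta_K(\tfrac12+it)\ll_K |t|^{\theta}$ with $\theta<n/4$ in the relevant range. I expect the split $3\le mn\le 6$ versus $mn\ge 7$ to reflect exactly which estimate is extremal: for small $mn$ a nontrivial exponent pair (the source of the constant $5$ in $5mn+2$) wins and simultaneously forces the altered log-power $m-1-\tfrac{10(m-2)}{5n+2}$, while for large $mn$ a simpler van der Corput bound, closer to convexity, is optimal and gives $m-1-\tfrac{2(m-2)}{mn}$.

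The main obstacle, and where essentially all the work lies, is this \emph{exponential-sum step}: obtaining the sharp estimate for the degree-$n$ sums attached to $\zeta_K$ and, above all, carrying out the joint optimization over $\sigma_0$, $T$, and the exponent pair while tracking the exact power of $\log x$. The logarithmic factors are delicate because they are sensitive to the mean order of $d_K^{(m)}(N)$ and to the horizontal-strip contributions, so the bookkeeping---rather than any single inequality---is the crux, and it is precisely this optimization that separates the two stated regimes.
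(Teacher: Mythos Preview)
This theorem is not proved in the present paper at all: it is quoted verbatim from Nowak~\cite{No93} as background in the introduction, with no argument given. There is therefore no ``paper's own proof'' to compare your proposal against.

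As a sketch of what Nowak actually does, your outline is in the right spirit---Perron's formula applied to $\zeta_K(s)^m$, contour shift, and van der Corput--type input to go beyond convexity---but it remains only a plan, not a proof. The decisive step you flag as ``the main obstacle'' is indeed the entire content: you have not specified which exponent pair (or which van der Corput iterate) yields the constant $5$ in $5mn+2$, nor carried out any of the optimization that produces the split at $mn=7$ or the precise logarithmic exponents. In particular, your remark that the naive log-power $(\log x)^{m-1}$ will be ``sharpened'' by exponential-sum savings is not how those fractional log-exponents arise; they come from the hyperbola-type decomposition of $d_K^{(m)}$ and partial summation against $\Delta_K$, not from subconvexity. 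So while nothing you wrote is wrong as orientation, the proposal stops well short of an argument, and the paper itself offers no proof to fill the gap---you would have to consult \cite{No93} directly.
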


For the estimate of lower bound, Girstmair, K\"uhleitner, M\"uller and Nowak obtain the following $\Omega$-results:

\begin{theorem}[Girstmair, K\"uhleitner, M\"uller and Nowak \cite{gk05}]
For any fixed number field $K$ with $n=[K:\mathbf Q]\ge2$ 
\begin{equation}\label{omega}
\Delta_K^m(x)=\Omega\left(x^{\frac12-\frac{1}{2mn}}(\log x)^{\frac12-\frac{1}{2mn}}(\log\log x)^{\kappa}(\log\log\log x)^{-\lambda}\right),
\end{equation}
where $\kappa$ and $\lambda$ are constants depending on $K$. To be more precise, let $K^{gal}$ be the Galois closure of $K/\mathbf Q$, $G=Gal\left(K^{gal}/\mathbf{Q}\right)$ its Galois group and $H=Gal\left(K^{gal}/K\right)$ the subgroup of $G$ corresponding to $K$. Then \[\kappa=\frac{mn+1}{2mn}\left(\sum_{\nu=1}^{n}\delta_\nu\nu^{\frac{2mn}{mn+1}}-1\right)\text{ and } \lambda=\frac{mn+1}{4mn}R+\frac{mn-1}{2mn},\]
where \[\delta_\nu=\frac{|\{\tau\in G~|~|\{\sigma\in G~|~\tau\in\sigma H\sigma^{-1}\}|=\nu|H|\}|}{|G|} \]
and $R$ is the number of $1\le\nu\le n$ with $\delta_\nu>0$.
\end{theorem}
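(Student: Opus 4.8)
My plan is to derive the lower bound from a Voronoi-type summation formula for $\Delta_K^m(x)$ and then run a constructive resonance argument. Write $q=mn$, and let $g(N)$ denote the $N$-th Dirichlet coefficient of $\zeta_K(s)^m$, so that $I_K^m(x)=\sum_{N\le x}g(N)$. Since $\zeta_K(s)^m$ is a Dirichlet series of degree $q$ with a functional equation inherited (as an $m$-th power) from that of the completed Dedekind zeta function $\Lambda_K(s)=D_K^{s/2}\gamma_K(s)\zeta_K(s)=\Lambda_K(1-s)$, a standard contour-shift / Mellin argument should yield, for a suitable truncation parameter $Y$,
\begin{equation*}
\Delta_K^m(x)=c\,x^{\frac12-\frac1{2q}}\sum_{N\le Y}\frac{g(N)}{N^{\frac12+\frac1{2q}}}\cos\!\left(2\pi q\,c_K\,N^{\frac1q}x^{\frac1q}+\beta\right)+(\text{error}),
\end{equation*}
where $c,c_K,\beta$ depend only on $K$ and the error is controlled for $x$ in a dyadic range once $Y$ is a suitable power of $x$. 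The exponent $\frac12-\frac1{2q}$ is exactly the one appearing in \eqref{omega}; the whole game is to show the oscillating sum is occasionally as large as $(\log x)^{\frac12-\frac1{2q}}(\log\log x)^{\kappa}(\log\log\log x)^{-\lambda}$.

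First I would isolate the dominant part of the sum. The coefficients $g(N)$ are largest, and the frequencies $c_K N^{1/q}$ are simultaneously most ``alignable'', when $N$ is built from norms of degree-one primes of $K$: a rational prime $p$ contributes a factor $(1-p^{-s})^{-1}$ to $\zeta_K(s)$ for each prime ideal of residue degree $1$ above it, and by Chebotarev's density theorem the number of such ideals above $p$ equals $\nu$ for a set of $p$ of density $\delta_\nu$, where $\delta_\nu$ is precisely the Galois-theoretic quantity in the statement (it records the proportion of Frobenius classes with exactly $\nu$ fixed points on $G/H$). Restricting the series to products (say squarefree) $N$ of such prime norms, each weighted by the multiplicity $g(N)$ coming from the $m$-th power, gives the resonating subsum whose behaviour dictates the constants.

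The heart of the argument is the resonance construction. For arbitrarily large $T$ I would seek $x\in[T,T^{A}]$ at which the phases $c_K N^{1/q}x^{1/q}$ are simultaneously close (modulo $1$) to values making every cosine in the resonating subsum close to $+1$, for all admissible $N$ up to a cut-off $N_0$. Existence of such $x$ follows from simultaneous Diophantine approximation (Dirichlet's box principle / Kronecker-type theorems): one can approximate the $R_0$ prescribed phases $c_K N^{1/q}$ (viewing $x^{1/q}$ as the free variable) within $\eta$ at the cost of letting $x$ range up to roughly $\eta^{-qR_0}$, so that $\log x\asymp R_0\log(1/\eta)$. Aligning these terms makes the subsum at least a constant times $\sum_{N\le N_0}g(N)N^{-\frac12-\frac1{2q}}$ over the resonating $N$. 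Estimating this weighted sum over products of degree-one prime norms, and then optimising the cut-off $N_0$ (equivalently $R_0$ and $\eta$) against the constraint $\log x\asymp R_0\log(1/\eta)$, is what produces the main factor $(\log x)^{\frac12-\frac1{2q}}$ together with the secondary factors. The exponent $\kappa=\frac{q+1}{2q}\bigl(\sum_\nu\delta_\nu\nu^{2q/(q+1)}-1\bigr)$ emerges from the $\delta_\nu$-weighted count of how many frequencies of each type enter, raised to the power $2q/(q+1)$ fixed by the optimisation, while the $(\log\log\log x)^{-\lambda}$ term records the lower-order cost of the approximation (with $R$ counting which $\delta_\nu$ are nonzero).

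The step I expect to be the main obstacle is precisely this final optimisation together with rigorous control of the non-resonating and tail contributions. One must show the aligned main terms are not swamped by (i) the tail $N>Y$ of the Voronoi series, (ii) the truncation error, and (iii) the unaligned terms $N\le N_0$ whose phases are uncontrolled; this typically requires a second-moment or smoothing argument and a careful choice of $A,Y,\eta,N_0$. Extracting the exact constants $\kappa$ and $\lambda$ — rather than merely the order $x^{1/2-1/2q}(\log x)^{1/2-1/2q}$ with unspecified logarithmic powers — is delicate, since it hinges on the precise arithmetic of the $\delta_\nu$-weighted frequency count and on tracking the efficiency of the Diophantine approximation to second order. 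I would route steps (i)--(iii) through a general $\Omega$-theorem for exponential sums $\sum_N a_N\cos(\lambda_N t+\phi_N)$ of Hafner--Soundararajan type, feeding in the degree-one prime statistics as the arithmetic input.
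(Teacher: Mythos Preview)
The paper does not prove this theorem at all: it is quoted as a known result of Girstmair, K\"uhleitner, M\"uller and Nowak \cite{gk05}, stated in the introduction solely to indicate the known lower bound for $\Delta_K^m(x)$, and no proof or sketch is given anywhere in the paper. So there is nothing in this paper to compare your proposal against.

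For what it is worth, your sketch is broadly in the right spirit for the cited source: the very title of \cite{gk05} is ``An improved lower bound by Soundararajan's method'', and the mechanism you describe---a truncated Voronoi-type expansion for $\Delta_K^m(x)$, restriction to frequencies built from degree-one prime norms, Chebotarev/fixed-point counting to produce the densities $\delta_\nu$, and a Dirichlet-approximation resonance argument to align the phases, with the final optimisation producing $\kappa$ and $\lambda$---is exactly the Soundararajan framework adapted to this setting. What you flag as the main obstacle (controlling the non-resonating and tail terms and extracting the precise secondary exponents) is indeed where all the work lies, and your outline stops well short of carrying that out; but as a plan it is consistent with the method the cited paper actually uses. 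Within the present paper, however, there is simply no proof to review.
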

We know the following conditional result:

If we assume the Lindel\"of hypothsis for Dedekind zeta function, it holds that for all $\varepsilon>0$, for all $K$ and for all $m$ 
\begin{equation}
\label{lindelof}\Delta_K^m(x)=O_{\varepsilon}\left(x^{\frac12+\varepsilon}D_K^{\varepsilon}\right).\end{equation}

In this paper we estimate the error term of $\Delta_K^m(x)$ by using exponential sums. In \cite{No93} and \cite{gk05}, they use other approaches, so we expect new development for the Piltz divisor problem over number field. As a results, we improve the estimate of upper bound of $\Delta_K^m(x)$ for many $K$ and many $m$.

In Section 2, we show some auxiliary theorems to consider the upper bound of the error term $\Delta_K^m(x)$. 
First we give a review of the convexity bound for the Dedekind zeta function and generalized Atkinson's Lemma \cite{at41}. 
Next we show proposition \ref{ideal}, which reduces an ideal counting problem to an exponential sums problem. This proposition plays a crucial role in our computing $\Delta_K^m(x)$.

In Section 3, we prove the following theorem about the error term $\Delta_K^m(x)$ by using estimate of exponential sums.
\begin{theorem}
For every $\varepsilon>0$ the following estimates hold. When $mn\ge4$, then  \[\Delta_K^m(x)=O_{n,m,\varepsilon}\left(x^{\frac{2mn-3}{2mn+1}+\varepsilon}D_K^{\,\frac{2m}{2mn+1}+\varepsilon}\right).\]
\end{theorem}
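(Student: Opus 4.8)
The plan is to feed the reduction of Proposition~\ref{ideal} into a van der Corput estimate for the resulting exponential sums and then to balance a single truncation parameter, keeping the dependence on $D_K$ explicit at every step. A useful sanity check is the case $m=1$: there this is precisely the strategy behind Bordell\`es' bound $x^{1-\frac{4}{2n+1}+\varepsilon}$ for $5\le n\le 10$ recorded in Theorem~\ref{idealhi}, so the denominator $2mn+1$ already indicates that the argument is the $m$-fold analogue of his. Concretely, I would first apply Proposition~\ref{ideal}. After subtracting the main term furnished by the residue in~(\ref{res}), the generalized Atkinson's Lemma rewrites $\Delta_K^m(x)$ as a bounded number of sums of the shape $\sum_{\nu}c(\nu)\,\psi(g_\nu(x))$, where $\psi(t)=t-\lfloor t\rfloor-\tfrac12$, the coefficients $c(\nu)$ are the convolution coefficients attached to $\zeta_K(s)^m$, and $g_\nu$ is a smooth phase built from the norm form, from $x$, and from the covolume $\sqrt{D_K}$ of $\mathcal O_K$. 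Expanding $\psi$ in a truncated Fourier series turns each piece into a genuine exponential sum $\sum_\nu e(h\,g_\nu(x))$ with $h$ ranging over a controlled dyadic set of frequencies.

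Next I would estimate these sums. The phase $g_\nu$ coming from a degree-$n$ norm form is smooth with second derivative of predictable order $\lambda$, so van der Corput's second-derivative inequality bounds an inner sum of length $L$ by $\ll L\lambda^{1/2}+\lambda^{-1/2}$; more refined savings come from iterating the process, i.e.\ from a suitable exponent pair. The convexity bound for $\zeta_K$ recalled in Section~2 controls the total size of the coefficients $c(\nu)$ and is the mechanism that injects the discriminant, through the analytic conductor $D_K(1+|t|)^{n}$. Carrying this conductor unchanged through the exponential-sum estimate, rather than absorbing it into an implied constant, is what produces the explicit factor $D_K^{\,\frac{2m}{2mn+1}+\varepsilon}$.

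The final step is the optimization. Truncating the $\nu$-sum at a height $P$ leaves a tail of size $\asymp x/P$ from the discarded $m$-tuples, which must be balanced against the exponential-sum contribution, an increasing function of $P$. Equating the two and tracking the $D_K$-factors through the equation gives the stated exponents; the hypothesis $mn\ge4$ enters exactly here, as the condition guaranteeing that the optimal $P$ lies in the range where the van der Corput estimate improves on the trivial bound $x^{1-\frac1{mn}}$, so that the resulting exponent $\frac{2mn-3}{2mn+1}=1-\frac{4}{2mn+1}$ is genuinely smaller than $1$.

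I expect the main obstacle to be the uniformity in $D_K$ and $m$, rather than any single analytic input. One has to follow the discriminant simultaneously through (i)~the convexity bound, (ii)~the sizes of the frequencies $h$ and of the second derivative $g_\nu''$, both of which scale with powers of $D_K^{1/n}$, and (iii)~the choice of $P$, and then confirm that every implied constant depends only on $n$, $m$ and $\varepsilon$. Upgrading the estimate from an $O_K(\cdot)$ bound of the type in Theorem~\ref{no} to one that is genuinely uniform in the field—displaying the dependence on $D_K$ as an explicit power—is the delicate part of the bookkeeping.
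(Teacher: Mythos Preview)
Your overall architecture—reduce via Proposition~\ref{ideal}, bound the resulting exponential sums, then optimize—matches the paper, and your remark that the target exponent $1-\tfrac{4}{2mn+1}$ is the $m$-fold analogue of Bordell\`es' $1-\tfrac{4}{2n+1}$ is exactly right. But two points are off. First, the output of Proposition~\ref{ideal} is not a $\psi$-sum awaiting Fourier expansion: Atkinson's lemma has already been applied inside that proposition, and what comes out is a \emph{bilinear} exponential sum
\[
\sum_{M<l\le M_1}F_K(l)\sum_{N<k\le N_1}{\boldsymbol e}\!\left(mn\Bigl(\tfrac{xlk}{D_K^m}\Bigr)^{1/mn}\right),
\]
with arithmetic weights $F_K=d_K^m*\mu$ on the outer variable. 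Second, and this is the real gap, estimating the inner $k$-sum by van der Corput (or any exponent pair) and then summing $|F_K(l)|$ trivially over $l$ does not give the stated bound: when $N$ is small one has $M\asymp S$, and the resulting term $x^{1/2}MS^{-1/2}$ is as large as $x^{1/2}S^{1/2}$, which is hopeless. The bilinear structure must be exploited nontrivially.

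The paper does this by splitting on the relative sizes of $M$ and $N$ via a parameter $0\le\alpha\le\tfrac13$: in the balanced range $S^{\alpha}\ll N\ll S^{1-\alpha}$ it invokes Wu's bilinear estimate (Lemma~\ref{54}), and in the unbalanced ranges $N\ll S^{\alpha}$ or $N\gg S^{1-\alpha}$ it uses Bordell\`es' estimate (Lemma~\ref{55}), the latter requiring the side condition $X=O(M)$ which forces the choice of $L$. This produces a dozen competing terms in $R$, which are then optimized not by hand but via Srinivasan's lemma (Lemma~\ref{srr}); finally one sets $\alpha=\tfrac{mn+3}{7mn-5}$ to make the worst term equal to $x^{\frac{2mn-3}{2mn+1}+\varepsilon}D_K^{\frac{2m}{2mn+1}+\varepsilon}$. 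So the missing ingredients in your plan are the bilinear-sum lemmas of Wu and Bordell\`es, the $\alpha$-dependent case split they necessitate, and the multi-term balancing via Lemma~\ref{srr}; a single-parameter van der Corput argument will not reach the exponent.
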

This theorem gives improvement of upper bound of $\Delta_K^m(x)$ for $mn\ge4$.

In Section 4, we give some application. First we give an uniform estimate for ideal counting function over number fields. Second we show a good uniform upper bound of the distribution of relatively $r$-prime lattice points over number fields as a corollary of the first application.

In Section 5, we consider a conjecture about estimates for Piltz divisor functions over number field.  It is proposed that for all number fields $K$ and for all $m$ the best upper bound of the error term is better than that on the assumption of the Lindel\"  of Hypothesis (\ref{lindelof}).  If $mn\le3$ this conjecture holds, but the other cases it seems to be very difficult.


\section{Auxiliary Theorem}
In this section, we show some important lemmas for our argument. Let $s=\sigma+it$ and $n=[K:\mathbf{Q}]$. We use the convexity bound of Dedekind zeta function to obtain an upper bound of the error term of Piltz divisor function $\Delta_K^m(x)$. 

It is well-known fact that Dedekind zeta function satisfies the following functional equation:
\begin{equation}
\label{fe}
\zeta_K(1-s)=D_K^{s-\frac 12}2^{n(1-s)}\pi^{-ns}\Gamma(s)^{n}\left(\cos\frac{\pi s}2\right)^{r_1+r_2}\left(\sin\frac{\pi s}2\right)^{r_2}\zeta_K(s), 
\end{equation}
where $r_1$ is the number of real embeddings of $K$ and $r_2$ is the number of pairs of
complex embeddings,

The Phragmen-Lindel\"of principle and (\ref{fe}) give the well-known convexity bound of the Dedekind zeta function \cite{ra59}:
For any $\varepsilon>0$ and $n=[K:\mathbf Q]$
\begin{equation}
\label{convex}
\zeta_K(\sigma+it)=\left\{\begin{array}{ll}
O_{n,\varepsilon}\left(|t|^{\frac n2-n\sigma+\varepsilon}D_K^{\,\frac 12-\sigma+\varepsilon}\right)& \text{ if }\sigma\le0,\\
O_{n,\varepsilon}\left(|t|^{\frac{n(1-\sigma)}2+\varepsilon}D_K^{\,\frac{1-\sigma}2+\varepsilon}\right)&\text{ if }0\le\sigma\le1,\\
O_{n,\varepsilon}\left(|t|^{\varepsilon}D_K^{\varepsilon}\right)&\text{ if } 1\le\sigma
\end{array}\right.
\end{equation}
as $|t|^nD_K\rightarrow\infty$, where $K$ runs through number fields with $[K:\mathbf{Q}]=n$. In the previous papers, we also use this convexity bound (\ref{convex}) to estimate the distribution of ideals. In the following sections, we show some estimate for $\Delta_K^m(x)$ in the similar way to our previous papers.

Lemma \ref{gcs} states the growth of the product of Gamma function and trigonometric functions in the functional equation (\ref{fe}) of Dedekind zeta function. 
\begin{lemma}
\label{gcs}
Let ${\tau}\in\{cos,sin\}$ and $n$ be a positive integer
\begin{align*}
&\frac{\Gamma(s)^{n}}{1-s}\left(\cos\frac{\pi s}2\right)^{r_1+r_2}\left(\sin\frac{\pi s}2\right)^{r_2}\\
=&\hspace{1mm}Cn^{-ns}\Gamma\left(ns-\frac{n+1}2\right){\tau}\left(\frac{n\pi s}2\right)+O_{n}\left(|t|^{-2+n\sigma-\frac n2}\right),
\end{align*}
where $C$ is a constant and $s=\sigma+it$.
\end{lemma}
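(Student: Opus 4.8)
The plan is to prove the identity by applying Stirling's formula to every Gamma factor and by analysing the trigonometric factors through their dominant exponential. Throughout I work in the region where $|t|$ is large and $\sigma$ ranges over a fixed bounded interval, so that all Gamma arguments lie in a sector $|\arg|\le\pi-\delta$ and the full Stirling expansion $\log\Gamma(w)=(w-\tfrac12)\log w-w+\tfrac12\log 2\pi+O(1/|w|)$ is available with relative error $O(1/|t|)$. I also record the basic relation $r_1+2r_2=n$, so that the total degree of the trigonometric factor, namely $(r_1+r_2)+r_2$, is exactly $n$; this is what makes a single trigonometric function of argument $n\pi s/2$ appear on the right.

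First I would treat the quotient of Gamma factors. Writing out Stirling for $n\log\Gamma(s)$ and for $\log\Gamma(ns-\tfrac{n+1}{2})$ and expanding $\log(1-s)$, the point is that the exponent $ns-\tfrac{n+1}{2}$ is tuned precisely so that all the $s\log s$ and $\log s$ contributions cancel except for a single leftover $\log s$, which combines with $-\log(1-s)$ into $\log\frac{s}{1-s}$. Since $\frac{s}{1-s}\to-1$ as $|t|\to\infty$ (from the upper side when $t\to+\infty$ and the lower side when $t\to-\infty$), this quantity tends to $\pm i\pi$ and hence contributes the \emph{same} factor $e^{\pm i\pi}=-1$ on both sides. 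Collecting the remaining $s$-independent terms I obtain
\[ \frac{\Gamma(s)^n}{1-s}=C_\Gamma\, n^{-ns}\Gamma\!\left(ns-\tfrac{n+1}{2}\right)\bigl(1+O_n(1/|t|)\bigr), \]
with $C_\Gamma$ an explicit nonzero real constant, independent of the sign of $t$.

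Next I would handle the trigonometric factor. For $t>0$ the factor $e^{-i\pi s/2}$ dominates, so $\cos\frac{\pi s}{2}=\tfrac12 e^{-i\pi s/2}(1+O(e^{-\pi t}))$ and $\sin\frac{\pi s}{2}=\tfrac{i}{2}e^{-i\pi s/2}(1+O(e^{-\pi t}))$; raising to the powers $r_1+r_2$ and $r_2$ and using $r_1+2r_2=n$ gives $2^{-n}i^{r_2}e^{-in\pi s/2}(1+O(e^{-\pi t}))$. For $t<0$ the conjugate exponential dominates and the same computation produces $2^{-n}(-i)^{r_2}e^{in\pi s/2}$. Comparing with $\cos\frac{n\pi s}{2}\sim\tfrac12 e^{\mp in\pi s/2}$ and $\sin\frac{n\pi s}{2}\sim\pm\tfrac{i}{2}e^{\mp in\pi s/2}$, I see that one constant multiple of a single trigonometric function matches both half-planes exactly when I choose $\tau=\cos$ for $r_2$ even and $\tau=\sin$ for $r_2$ odd; this is the role of the clause $\tau\in\{\cos,\sin\}$. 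The resulting errors $O(e^{-\pi|t|})$ are negligible against the Stirling error.

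Combining the two steps, the constant $C=C_\Gamma\cdot C_{\mathrm{trig}}$ is the asserted $C$, the leading term is $C\,n^{-ns}\Gamma(ns-\tfrac{n+1}{2})\tau(\tfrac{n\pi s}{2})$, and since this leading term has magnitude $\asymp|t|^{n\sigma-n/2-1}$ (again by Stirling, using $|\Gamma(\sigma'+it')|\sim\sqrt{2\pi}\,|t'|^{\sigma'-1/2}e^{-\pi|t'|/2}$ together with the exponential size of $\tau$), the relative error $O(1/|t|)$ turns into the claimed absolute error $O_n(|t|^{n\sigma-n/2-2})$. I expect the main obstacle to be the bookkeeping that secures a \emph{single} constant $C$ valid for both signs of $t$: one must check both that the leftover $\log\frac{s}{1-s}$ gives the same $-1$ on each side and that the parity of $r_2$ forces the correct choice between $\cos$ and $\sin$. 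Everything else is routine asymptotic expansion.
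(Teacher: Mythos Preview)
Your proposal is correct and follows exactly the approach the paper indicates: the paper's own proof consists of the single sentence ``This lemma is shown from the Stirling formula and estimate for trigonometric function,'' and what you have written is a careful execution of precisely that plan, including the verification (via the parity of $r_2$) that a single $\tau\in\{\cos,\sin\}$ and a single constant $C$ work simultaneously for $t>0$ and $t<0$.
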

\begin{proof}
This lemma is shown from the Stirling formula and estimate for trigonometric function. 
\end{proof}

{Next we introduce the generalized Atkinson's lemma.} This lemma is quite useful for calculating integrals of the Dedekind zeta function. 
\begin{lemma}[Atkinson \cite{at41}]
\label{atk}
Let $y>0,\ 1<A\le B$ \text{ and } ${\tau}\in\{\cos, \sin\}$, and we define
\[I=\frac1{2\pi i}\int_{A-iB}^{A+iB}\Gamma(s){\tau}\left(\frac{\pi s}2\right)y^{-s}\ ds.\]
If $y\le B$, then \[I={\tau}(y)+O\left(y^{-\frac12}\min\left(\left(\log \frac By\right)^{-1},B^{\frac12}\right)+y^{-A}B^{A-\frac12}+y^{-\frac12}\right).\]
If $y>B$, then \[I=O\left(y^{-A}\left(B^{A-\frac12}\min\left(\left(\log \frac yB\right)^{-1},B^{\frac12}\right)+A^{A-\frac12}\right)\right).\]
\end{lemma}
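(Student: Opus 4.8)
The plan is to recognize the kernel $\Gamma(s)\tau(\pi s/2)$ as the Mellin transform of the elementary function $\tau(y)$, and then to analyze the truncated inverse transform $I$ by contour manipulation together with stationary-phase-type estimates. Concretely, for $0<c<1$ one has the classical pairs $\int_0^\infty \cos(y)y^{s-1}\,dy=\Gamma(s)\cos(\pi s/2)$ and $\int_0^\infty \sin(y)y^{s-1}\,dy=\Gamma(s)\sin(\pi s/2)$, so Mellin inversion gives the exact identity $\tau(y)=\frac1{2\pi i}\int_{(c)}\Gamma(s)\tau(\pi s/2)y^{-s}\,ds$ on any vertical line with $0<c<1$. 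The whole difficulty is that the line in $I$ sits at $\Re s=A>1$, where by Stirling the integrand has modulus $\asymp |t|^{A-1/2}$ and hence does not decay, so the integral converges only conditionally and is moreover truncated at height $B$.

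First I would record that $\Gamma(s)\cos(\pi s/2)$ is holomorphic except for simple poles at $s=0,-2,-4,\dots$, while $\Gamma(s)\sin(\pi s/2)$ is holomorphic except for simple poles at $s=-1,-3,\dots$; in either case there are no poles in the strip $c\le\Re s\le A$, so contour shifting is unobstructed. The estimates rest on the phase of the integrand: writing $\tau(\pi s/2)$ through $e^{\pm i\pi s/2}$ and applying Stirling's formula (as in the proof of Lemma \ref{gcs}), the surviving factor on a vertical line has modulus $\asymp |t|^{\sigma-1/2}$ and phase $\phi(t)=t\log(|t|/y)-t+O(1)$, whence $\phi'(t)=\log(|t|/y)+O(1)$. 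The integrand therefore has a single saddle near $|t|\approx y$, and the position of the truncation height $B$ relative to $y$ is exactly what separates the two cases.

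For the case $y>B$ I would estimate $I$ directly on $\Re s=A$. Since the saddle lies at $|t|\approx y>B$, the phase is non-stationary throughout $|t|\le B$, so integration by parts (equivalently the second mean value theorem) yields cancellation with a gain of $1/|\phi'(t)|\asymp(\log(y/B))^{-1}$ near the endpoint $t=B$; combined with the trivial size $|t|^{A-1/2}y^{-A}$ this produces the term $y^{-A}B^{A-1/2}\min((\log(y/B))^{-1},B^{1/2})$, the alternative $B^{1/2}$ being the trivial window bound when $B$ is near $y$. The central range $|t|\lesssim A$, where Stirling's asymptotic degenerates, is bounded crudely by $\Gamma(A)y^{-A}\ll A^{A-1/2}y^{-A}$, giving the remaining term. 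For the case $y\le B$ I would instead shift the contour from $\Re s=A$ to $\Re s=1/2$, crossing no poles; Cauchy's theorem writes $I$ as the truncated integral on $\Re s=1/2$ plus the two horizontal segments at $t=\pm B$. That truncated line integral equals $\tau(y)$ minus its tails beyond height $B$, by the Mellin inversion above. Now the saddle $|t|\approx y$ lies \emph{inside} $|t|\le B$, so the tails are non-stationary; since on $\Re s=1/2$ the integrand has modulus $\asymp y^{-1/2}$, they contribute $O(y^{-1/2}\min((\log(B/y))^{-1},B^{1/2}))$, while the horizontal segments, dominated by their $\Re s=A$ endpoints, contribute $O(y^{-A}B^{A-1/2})$; the isolated $y^{-1/2}$ comes from the secondary terms in Stirling's expansion. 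Collecting these yields the first displayed estimate.

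The main obstacle is the oscillatory tail estimate in the transition regime $B\approx y$, where the phase becomes stationary and integration by parts breaks down: there one must pass to the trivial bound over a saddle window of width $\asymp|t|^{1/2}$, which is precisely the origin of the factors $\min(\,\cdot\,,B^{1/2})$. Making the two regimes match uniformly in $y$ and $B$, and checking that the constants implicit in Stirling's formula and in the second mean value theorem are independent of $y,A,B$, is the delicate bookkeeping the proof must carry out; by contrast the residue and convergence issues are routine once the phase analysis is in place.
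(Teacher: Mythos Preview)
The paper does not supply a proof of this lemma; it is quoted verbatim as a classical result of Atkinson \cite{at41} and used as a black box in the proof of Proposition~\ref{ideal}. Your sketch---recognizing $\Gamma(s)\tau(\pi s/2)$ as the Mellin transform of $\tau(y)$, shifting the contour to $\Re s=1/2$ when $y\le B$, and handling the truncation and tails by Stirling's formula together with an endpoint/saddle-window dichotomy---is exactly the standard argument that goes back to Atkinson's original paper, and is correct in outline.
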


Finally we introduce the following lemma to reduce the ideal counting problem to an exponential sum problem.  
\begin{lemma}[Bordell\`es \cite{bo15}]
\label{bo}
Let $1\le L\le R$ be a real number and ${f}$ be an arithmetical function satisfying ${f}(m)=O(m^\varepsilon)$, and let ${\mbox{\boldmath $e$}} (x)=exp(2\pi ix)$ and $F={f}*\mu$, where $*$ is the Dirichlet product symbol. For $a\in\mathbf R-\{1\}$, $b,x\in\mathbf R$ and for every $\varepsilon>0$ the following estimate holds.
\begin{align*}
&\sum_{m\le R}\frac{{f}(m)}{m^{a}}{\tau}\left(2\pi xm^{b}\right)\\
&=O_{n, \varepsilon}\left(\begin{array}{l}
L^{1-a}+R^{\varepsilon}\underset{L<S\le R}{\max}S^{-a}\times\\
\displaystyle \times\underset{S<S_1\le 2S}{\max}\underset{\substack{M,N\le S_1\\MN\asymp S}}{\max}\underset{\substack{M\le M_1\le 2M\\ N\le N_1\le 2N}}{\max}\left|\sum_{M<m\le M_1}F(m)\sum_{N<n\le N_1}{\mbox{\boldmath $e$}}\left(x(mn)^{b}\right)\right|
\end{array}\right).
\end{align*}
\end{lemma}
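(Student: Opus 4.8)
The plan is to strip the smooth coefficients $m^{-a}\tau(2\pi x m^{b})$ off the sum and expose a bilinear form to which exponential-sum machinery can later be applied. The device is M\"obius inversion: since $F=f*\mu$ is equivalent to $f=F*1$, I can write $f(m)=\sum_{d\mid m}F(d)$ and hence
\[
\sum_{m\le R}\frac{f(m)}{m^{a}}\tau\!\left(2\pi x m^{b}\right)=\sum_{dn\le R}F(d)\,\frac{\tau\!\left(2\pi x (dn)^{b}\right)}{(dn)^{a}}.
\]
I also record the elementary identities $\cos(2\pi y)=\tfrac12(\mbox{\boldmath $e$}(y)+\mbox{\boldmath $e$}(-y))$ and $\sin(2\pi y)=\tfrac1{2i}(\mbox{\boldmath $e$}(y)-\mbox{\boldmath $e$}(-y))$, so that $\tau(2\pi x(dn)^{b})$ is in either case a fixed linear combination of $\mbox{\boldmath $e$}(\pm x(dn)^{b})$; after replacing $x$ by $-x$ where necessary it therefore suffices to estimate the single kernel $\mbox{\boldmath $e$}(x(dn)^{b})$, which is exactly the one occurring on the right-hand side.

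Next I would split the outer sum at $L$. On the initial segment $m\le L$ the bound $|f(m)|\ll m^{\varepsilon}$ together with $|\tau|\le1$ gives a contribution
\[
\ll\sum_{m\le L}m^{\varepsilon-a},
\]
which by comparison with $\int_1^{L}t^{-a}\,dt$ yields the term $L^{1-a}$; this is precisely where the hypothesis $a\neq1$ enters, the antiderivative $t^{1-a}/(1-a)$ being singular at $a=1$.

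The heart of the argument is the tail $L<m\le R$, where $m=dn$. I would dissect this range into $O(\log R)$ dyadic blocks $S<dn\le 2S$ with $L<S\le R$, and inside each block split the two factors into dyadic ranges $M<d\le 2M$ and $N<n\le 2N$ with $MN\asymp S$. On such a box the weight $(dn)^{-a}$ is smooth, of size $\asymp S^{-a}$, and of bounded variation in each variable, so a two-dimensional partial summation detaches it: it pulls out the scalar $S^{-a}$, replaces $\tau(2\pi x(dn)^{b})$ by $\mbox{\boldmath $e$}(x(dn)^{b})$, and leaves the boundary sums $\sum_{M<d\le M_1}F(d)\sum_{N<n\le N_1}\mbox{\boldmath $e$}(x(dn)^{b})$ with $M\le M_1\le2M$, $N\le N_1\le2N$ and $S<S_1\le2S$. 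Bounding each boundary term by the supremum over these endpoints and summing the $O(\log R)\ll R^{\varepsilon}$ blocks produces exactly the nested maxima of the asserted estimate.

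The main obstacle is the partial-summation step: one must check that $(dn)^{-a}$ has total variation $\ll S^{-a}$ uniformly over the dyadic box, so that detaching it costs only the factor $S^{-a}$ and an admissible $R^{\varepsilon}$, and that the resulting boundary exponential sums are genuinely dominated by the triple maximum over $S_1,M_1,N_1$. A secondary point needing care is that the weights $F(d)$ must be kept intact inside the $d$-sum rather than estimated away, since it is precisely this bilinear structure---rather than the size of $F$---that the later exponential-sum estimates exploit; one also has to confirm that dissecting the product $dn$ dyadically is compatible with dissecting $d$ and $n$ separately, i.e.\ that the single constraint $MN\asymp S$ really captures every box.
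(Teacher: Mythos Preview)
The paper does not give its own proof of this lemma; it is quoted directly from Bordell\`es \cite{bo15} and used as a black box in the proof of Proposition~\ref{ideal}. So there is no in-paper argument to compare against.

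That said, your sketch is the standard route and is essentially forced by the shape of the conclusion: the nested dyadic maxima, the factor $S^{-a}$, and the retained weights $F(m)$ all point to M\"obius inversion $f=F*1$ followed by dyadic dissection and partial summation to strip the smooth factor $(dn)^{-a}=d^{-a}n^{-a}$. The factorisation of the weight makes the two successive one-variable Abel summations clean, so your worry about the ``main obstacle'' is not serious. The replacement of $\tau$ by $\mbox{\boldmath $e$}$ via Euler's formula is exactly right.

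One minor caveat: your treatment of the initial segment $\sum_{m\le L}m^{\varepsilon-a}\ll L^{1-a}$ via the antiderivative is valid only for $a<1$; for $a>1$ the sum is $O(1)$, which is not $O(L^{1-a})$. The lemma as stated allows any $a\ne1$, so either an $O(1)$ should be added on the right or one restricts to $a<1$. In the paper's application one has $a=\tfrac{mn+1}{2mn}<1$, so the issue does not arise there.
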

Next proposition plays a crucial role in our computing $I_K^m(x)$. We consider the distribution of ideals of $\mathcal{O}_K$, where $K$ runs through extensions with $[K:\mathbf Q]=n$ and some conditions. The detail of the conditions will be determined later, but they state the relation of the principal term and the error term.
\begin{proposition}
\label{ideal}
Let $F_K=I_K^m*\mu$.
For every $\varepsilon>0$ the following estimate holds.
\begin{align*}
&\Delta_K^m(x)\\
=&\hspace{1mm}O_{n,m, \varepsilon}\left(\begin{array}{l}L^{1-\alpha}+x^{\frac{mn-1}{2mn}}D_K^{\,\frac1{2n}}R^{\varepsilon}\underset{L\le S\le R}{\max}S^{-\frac{mn+1}{2mn}}\times\\
\displaystyle \times\underset{S<S_1\le 2S}{\max}\underset{\substack{M,N\le S_1\\MN\asymp S}}{\max}\underset{\substack{M\le M_1\le 2M\\ N\le N_1\le 2N}}{\max}\left|\sum_{M<l\le M_1}F_K(m)\sum_{N<k\le N_1}{\mbox{\boldmath $e$}}\left(mn\left(\frac{xlk}{D_K}\right)^{\frac1{mn}}\right)\right|\\
+x^{\frac{mn-2}{2mn}+\varepsilon}D_K^{\,\frac1n+\varepsilon}R^{\frac{mn-2}{2mn}+\varepsilon}+x^{\frac{mn-1}{mn}+\varepsilon}D_K^{\,\frac1n+\varepsilon}R^{-\frac1{mn}+\varepsilon}
\end{array}\right).
\end{align*}
where $K$ runs through number fields with $[K:\mathbf Q]=n$ and some conditions.
\end{proposition}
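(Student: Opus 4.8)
The plan is to realize $\Delta_K^m(x)$ as a contour integral, reflect it through the functional equation (\ref{fe}), and then convert the reflected integral into the double exponential sum of Lemma \ref{bo} by means of Lemmas \ref{gcs} and \ref{atk}. Concretely, I would begin from a truncated Perron representation
\[
I_K^m(x)=\frac1{2\pi i}\int_{c-iT}^{c+iT}\zeta_K(s)^m\frac{x^s}{s}\,ds+(\text{truncation error}),
\]
with $c=1+1/\log x$, and subtract the residue at $s=1$ from (\ref{res}); this isolates $\Delta_K^m(x)$ as the integral that I then push to the line $\mathrm{Re}(s)=-\delta$. On that line $\mathrm{Re}(1-s)>1$, so I apply the $m$-th power of (\ref{fe}), writing $\zeta_K(s)=\zeta_K(1-w)$ with $w=1-s$: this replaces $\zeta_K(s)^m$ by $[\text{archimedean factors}(w)]^m\,\zeta_K(w)^m$, where the archimedean part carries $D_K^{m(w-1/2)}$, $\Gamma(w)^{mn}$, and the trigonometric factors raised to the powers $m(r_1+r_2)$ and $mr_2$.

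Next I would invoke Lemma \ref{gcs} with the parameters $(mn,mr_1,mr_2)$ in place of $(n,r_1,r_2)$: since $mr_1+2mr_2=mn$, the hypotheses are met and the lemma collapses $\frac{\Gamma(w)^{mn}}{1-w}(\cos\frac{\pi w}2)^{m(r_1+r_2)}(\sin\frac{\pi w}2)^{mr_2}$ into the single term $C(mn)^{-mnw}\Gamma(mnw-\frac{mn+1}2)\tau(\frac{mn\pi w}2)$ plus a power-saving remainder. It is worth noting that the factor $\frac1{1-w}$ built into Lemma \ref{gcs} is exactly the $\frac1s$ from Perron after the reflection $s=1-w$, so the two combine cleanly. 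I would then expand $\zeta_K(w)^m=\sum_N f(N)N^{-w}$, where $f$ is the coefficient function of $\zeta_K(s)^m$ (whose summatory function is $I_K^m$, so that $F_K=f*\mu$ and $f(N)=O(N^\varepsilon)$), and integrate term by term. After the substitution $u=mnw-\frac{mn+1}2$ each term takes precisely the shape $\frac1{2\pi i}\int\Gamma(u)\tau(\frac{\pi u}2)y^{-u}\,du$ treated by Atkinson's Lemma \ref{atk}, whose main contribution is $\tau(y_N)$ with $y_N\asymp mn(xN/D_K)^{1/mn}$.

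Collecting the $N$-independent factors pulls out exactly the prefactor $x^{(mn-1)/(2mn)}D_K^{1/(2n)}$ together with the weight $N^{-(mn+1)/(2mn)}$: one checks that the contributions $x^{1}$ from $x^{1-w}$ and $x^{-(mn+1)/(2mn)}$ from the $w$-power combine to the exponent $\frac{mn-1}{2mn}$, and that $D_K^{-m/2}$ together with $D_K^{m(mn+1)/(2mn)}$ combine to $\frac1{2n}$, while the abscissa is $a=\frac{mn+1}{2mn}$ and the phase exponent is $b=\frac1{mn}$. Feeding the resulting sum $\sum_{N\le R}\frac{f(N)}{N^{a}}\tau\!\left(2\pi\,mn(x/D_K)^{1/mn}N^{b}\right)$ into Lemma \ref{bo}, with its ``$x$'' taken to be $mn(x/D_K)^{1/mn}$, its ``$a$'' equal to $\frac{mn+1}{2mn}=\alpha$ and its ``$b$'' equal to $\frac1{mn}$, produces the $L^{1-\alpha}$ term (the free parameter $L$ being at our disposal to absorb the prefactor) and the double exponential sum displayed in the proposition.

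The remaining task is to assemble the error terms, and this is where I expect the genuine work to lie. The Atkinson errors from Lemma \ref{atk}, summed against $f(N)/N^{a}$ over $N\le R$, must be shown to contribute the growing-in-$R$ term $x^{(mn-2)/(2mn)+\varepsilon}D_K^{1/n+\varepsilon}R^{(mn-2)/(2mn)+\varepsilon}$, while the tail of the Dirichlet series over $N>R$, estimated by the convexity bound (\ref{convex}), must yield the decaying term $x^{(mn-1)/mn+\varepsilon}D_K^{1/n+\varepsilon}R^{-1/mn+\varepsilon}$; the Perron truncation error and the Lemma \ref{gcs} remainder then have to be absorbed into these. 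The delicate point throughout is \emph{uniformity}: every estimate must be carried with explicit dependence on $D_K$ (entering only through (\ref{convex})) and on $x$, and the interchange of summation with integration, together with the admissible choices of $\delta$, $T$ and $R$, must be justified so that no discarded quantity exceeds the stated terms. Tracking these exponents through Stirling's formula and through the two regimes $y\le B$ and $y>B$ of Lemma \ref{atk}, uniformly over the whole family of fields with $[K:\mathbf Q]=n$, is the main obstacle.
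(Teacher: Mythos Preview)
Your outline follows the paper's proof essentially step for step: Perron with a rectangular contour, shift to $\Re s=-\varepsilon$, reflect via (\ref{fe}), collapse the archimedean factor with Lemma~\ref{gcs}, change variables, expand $\zeta_K^m$ as a Dirichlet series, apply Atkinson's Lemma~\ref{atk} term by term, and finish with Lemma~\ref{bo}. The prefactor and weight computations you sketch are exactly right.

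One small correction on the bookkeeping of the error terms: the decaying term $x^{\frac{mn-1}{mn}+\varepsilon}D_K^{\frac1n+\varepsilon}R^{-\frac1{mn}+\varepsilon}$ is \emph{not} produced by a convexity estimate of the tail $N>R$. In the paper (and in your setup) the tail $N>R$ is handled by the $y>B$ branch of Lemma~\ref{atk}, which contributes only to the growing term. The decaying term is simply the Perron truncation error $x^{1+\varepsilon}/T$ after one fixes $T=2\pi\bigl(xR/D_K^m\bigr)^{1/mn}$, so it is already accounted for rather than something to be absorbed. With that adjustment your plan matches the paper's argument.
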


\begin{proof}
Let $d_K^m(l)$ be the number of $m$-tuples of ideals $(\mathfrak{a}_1, \mathfrak{a}_2,\ldots,\mathfrak{a}_m)$ such that product of their ideal norm $\mathfrak{Na}_1\cdots\mathfrak{Na}_m=l$.
Then one can check it easily that \begin{equation}\label{zeta}
\zeta_K(s)^m=\sum_{l=1}^{\infty}\frac{d_K^m(l)}{l^s}\ \text{ for } \Re s>1 
\end{equation}
and
\[I_K^m(x)=\sum_{l\le x}d_K^m(l).\]
 Thus Perron's formula plays  a crucial role in this proof. 

We consider the integral \[\frac1{2\pi i}\int_{C}\zeta_K(s)^m\frac{x^s}s\ ds,\]
where $C$ is the contour $C_1\cup C_2\cup C_3\cup C_4$ shown in the following Figure \ref{path2}.

\setlength\unitlength{1truecm}
\begin{figure}[h]
\begin{center}
\begin{picture}(4.5,4.5)(0,0)
\small
\put(-1,2){\vector(1,0){4}}
\put(0,0){\vector(0,1){4}}
\put(-0.2,0.5){\vector(1,0){1.1}}
\put(0.9,0,5){\line(1,0){1.1}}
\put(2,0.5){\vector(0,1){2}}
\put(2,2.4){\line(0,1){1.1}}
\put(2,3.5){\vector(-1,0){1.1}}
\put(0.9,3,5){\line(-1,0){1.1}}
\put(0.2,2.1){O}
\put(3.1,2){$\Re(s)$}
\put(-0.3,4.1){$\Im(s)$}
\put(0,2){\circle*{0.1}}
\put(2,3.5){\line(-1,0){0.5}}
\put(-0.2,3.5){\vector(0,-1){1.1}}
\put(-0.2,2.5){\line(0,-1){2}}
\put(-0.1,3.5){\line(1,0){0.2}}
\put(-0.5,3.5){$iT$}
\put(-0.1,0.5){\line(1,0){0.2}}
\put(-0.8,0.3){$-iT$}
\put(2,2.9){\line(0,1){0.2}}
\put(-0.7,2.1){$-\varepsilon$}
\put(2.2,2.2){$1+\varepsilon$}
\put(1.3,3.6){$C_2$}
\put(2.1,1.7){$C_1$}
\put(1.3,0.1){$C_4$}
\put(0.3,1.7){$C_3$}
\end{picture}
\end{center}
\caption{\label{path2}}\end{figure}

In a way similar to the well-known proof of Perron's formula, we estimate
\begin{equation}\label{perron}\frac1{2\pi i}\int_{C_1}\zeta_K(s)^m\frac{x^s}s\ ds=I_K^m(x)+O_{\varepsilon}\left(\frac{x^{1+\varepsilon}}{T}\right).\end{equation}
We can select the large $T$, so that the $O$-term in the right hand side is sufficiently small. For estimating the left hand side by using estimate (\ref{convex}), we divide it into the integrals over $C_2, C_3$ and $C_4$. 

First we consider the integrals over $C_2$ and $C_4$ as
\begin{align*}
&\left|\frac1{2\pi i}\int_{C_2\cup C_4}\zeta_K(s)^m\frac{x^s}s\ ds\right|\\
\le&\frac1{2\pi}\int^{1+\varepsilon}_{-\varepsilon}\left|\zeta_K\left(\sigma+iT\right)\right|^m\frac{x^{\sigma}}{T}\ d\sigma+\frac1{2\pi}\int^{1+\varepsilon}_{-\varepsilon}\left|\zeta_K\left(\sigma-iT\right)\right|^m\frac{x^{\sigma}}{T}\ d\sigma.\\
\end{align*}
It holds by the convexity bound of Dedekind zeta function (\ref{convex}) that their sum is estimated as
\begin{align}
\label{24}
\left|\frac1{2\pi i}\int_{C_2\cup C_4}\zeta_K(s)^m\frac{x^s}s\ ds\right|&=O_{n,m,\varepsilon}\left(\int^{1+\varepsilon}_{-\varepsilon}(T^{mn}D_K^{\,m})^{\frac{1-\sigma}2+\varepsilon}\frac{x^{\sigma}}{T}\ d\sigma\right)\nonumber\\[-3mm]
&{}\\[-3mm]
&=O_{n,m,\varepsilon}\left(\frac{x^{1+\varepsilon}D_K^{\,\varepsilon}}{T^{1-\varepsilon}}+T^{\frac {mn}2-1+\varepsilon}D_K^{\,\frac m2+\varepsilon}x^{-\varepsilon}\right).\nonumber
\end{align}
By the Cauchy residue theorem, (\ref{perron}) and (\ref{24}) we obtain 
\begin{equation}
\label{c3}
\Delta_K^m(x)=\int_{C_3}\zeta_K(s)^m\frac{x^s}s\ ds+O_{n,m,\varepsilon}\left(\frac{x^{1+\varepsilon}D_K^{\,\varepsilon}}{T^{1-\varepsilon}}+T^{\frac {mn}2-1+\varepsilon}D_K^{\,\frac m2+\varepsilon}x^{-\varepsilon}\right).
\end{equation}
Thus it suffices to consider the integral over $C_3$ as
\begin{align*}
\frac1{2\pi i}\int_{C_3}\zeta_K(s)^m\frac{x^s}s\ ds&=\frac 1{2\pi i}\int_{-\varepsilon-iT}^{-\varepsilon+iT}\zeta_K(s)^m\frac{x^{s}}{s}\ ds.\\
\intertext{Changing the variable $s$ to $1-s$, we have}
\frac1{2\pi i}\int_{C_3}\zeta_K(s)^m\frac{x^s}s\ ds&=\frac 1{2\pi i}\int_{1+\varepsilon-iT}^{1+\varepsilon+iT}\zeta_K(1-s)^m\frac{x^{1-s}}{1-s}\ ds.\\
\end{align*} 
From this functional equation  (\ref{fe}), it holds that
\begin{align*}
&\frac1{2\pi i}\int_{C_3}\zeta_K(s)^m\frac{x^s}s\ ds\\
=&\hspace{1mm}\frac 1{2\pi i}\int_{1+\varepsilon-iT}^{1+\varepsilon+iT}\left(D_K^{\,s-\frac12}2^{n(1-s)}\pi^{-ns}\Gamma(s)^{n}\left(\cos\frac{\pi s}2\right)^{r_1+r_2}\left(\sin\frac{\pi s}2\right)^{r_2}\zeta_K(s)\right)^m\frac{x^{1-s}}{1-s}\ ds.\\
\intertext{By lemma \ref{gcs} the integral over $C_3$ can be expressed as}
&\frac1{2\pi i}\int_{C_3}\zeta_K(s)\frac{x^s}s\ ds\\
=&\hspace{1mm}\frac {Cx}{2\pi i}\int_{1+\varepsilon-iT}^{1+\varepsilon+iT}D_K^{\,-\frac m2}\left(\frac{(2n)^{mn}\pi^{mn}x}{D_K^m}\right)^{-s}\Gamma\left(mns-\frac{mn+1}2\right){\tau}\left(\frac{mn\pi s}2\right)\zeta_K(s)\ ds\\
&+O_{n,m,\varepsilon}\left(D_K^{\,\frac m2+\varepsilon}T^{\frac {mn}2-1+\varepsilon}x^{-\varepsilon}\right).\\
\intertext{Changing the variable $mns-\frac{mn+1}2$ to $s$, we have}
&\frac1{2\pi i}\int_{C_3}\zeta_K(s)\frac{x^s}s\ ds\\
=&\hspace{1mm}\frac {Cx^{\frac{mn-1}{2mn}}D_K^{\,\frac1{2n}}}{2\pi i}\int_{\frac{mn-1}2+mn\varepsilon-mniT}^{\frac{mn-1}2+mn\varepsilon+mniT}\left(2mn\pi \left(\frac x{D_K^m}\right)^{\frac1{mn}}\right)^{-s}\Gamma(s){\tau}\left(\frac{\pi s}2+\frac{(mn+1)\pi}4\right)\\
&\times\zeta_K\left(\frac s{mn}+\frac{mn+1}{2mn}\right)\ ds+O_{n,m,\varepsilon}\left(D_K^{\,\frac m2+\varepsilon}T^{\frac {mn}2-1+\varepsilon}x^{-\varepsilon}\right).\\
\intertext{From (\ref{zeta}) the function $\zeta_K(s)^m$ can be expressed as a Dirichlet series. It is absolutely and uniformly convergent on compact subsets on $\Re(s)>1$. Therefore we can interchange the order of summation and integral. Thus we obtain}
&\int\left(2mn\pi \left(\frac x{D_K^m}\right)^{\frac1{mn}}\right)^{-s}\Gamma(s){\tau}\left(\frac{\pi s}2+\frac{(mn+1)\pi}4\right)\zeta_K\left(\frac s{mn}+\frac{mn+1}{2mn}\right)\ ds\\
=&\hspace{1mm}\sum_{l=1}^{\infty}\frac{d_K^m(l)}{l^{\frac{mn+1}{2mn}}}\int\left(2mn\pi \left(\frac {lx}{D_K^m}\right)^{\frac1{mn}}\right)^{-s}\Gamma(s){\tau}\left(\frac{\pi s}2+\frac{(mn+1)\pi}4\right)\ ds,\\
\intertext{where the integration is on the vertical line from $\frac{mn-1}2+mn\varepsilon-mniT$ to $\frac{mn-1}2+mn\varepsilon+mniT$. Properties of trigonometric function lead to \[{\tau}\left(\frac{\pi s}2+\frac{(mn+1)\pi}4\right)=\pm\left\{\begin{array}{ll}
{\tau}\left(\frac{\pi s}2\right)&\text{ if } mn \text{ is odd},\\
\frac1{\sqrt2}\left({\tau}\left(\frac{\pi s}2\right)\pm {\tau_1}\left(\frac{\pi s}2\right)\right)&\text{ if } mn \text{ is even,}
\end{array}
\right.\]
where $\{{\tau},{\tau_1}\}=\{\sin,\cos\}$. Hence it holds that}
&\frac1{2\pi i}\int_{C_3}\zeta_K(s)^m\frac{x^s}s\ ds\\
=&\hspace{1mm}\frac {Cx^{\frac{mn-1}{2mn}}D_K^{\,\frac1{2n}}}{2\pi i}\sum_{l=1}^{\infty}\frac{d_K^m(l)}{l^{\frac{mn+1}{2mn}}}\int_{\frac{mn-1}2+mn\varepsilon-mniT}^{\frac{mn-1}2+mn\varepsilon+mniT}\left(2mn\pi \left(\frac {lx}{D_K^m}\right)^{\frac1{mn}}\right)^{-s}\Gamma(s){\tau}\left(\frac{\pi s}2\right)\ ds\\
&+O_{n,m,\varepsilon}\left(D_K^{\,\frac m2+\varepsilon}T^{\frac {mn}2-1+\varepsilon}x^{-\varepsilon}\right).
\end{align*}
Now we apply lemma \ref{atk} to this integral with $y=2mn\pi\left(\frac{lx}{D_K^m}\right)^{\frac1{mn}},\ A=\frac{mn-1}2+mn\varepsilon,\ B=mnT \text{ and }T=2\pi\left(\frac{xR}{D_K^m}\right)^{\frac1{mn}}$, this becomes
\begin{align*}
&\frac1{2\pi i}\int_{C_3}\zeta_K(s)^m\frac{x^s}s\ ds\\
=&\hspace{1mm}\frac {Cx^{\frac{mn-1}{2mn}}D_K^{\,\frac1{2n}}}{2\pi i}\sum_{l\le R}\frac{d^m_K(l)}{l^{\frac{mn+1}{2mn}}}{\tau}\left(2mn\pi \left(\frac {lx}{D_K^m}\right)^{\frac1{mn}}\right)\\
&+O_{n,m, \varepsilon}\left(x^{\frac{mn-2}{2mn}}D_K^{\,\frac1{n}}\sum_{l\le R}\frac{d_K^m(l)}{l^{\frac{mn+2}{2mn}}}\min\left\{\left(\log \frac Rl\right)^{-1},\ \left(\frac{Rx}{D_K^m}\right)^{\frac1{2mn}}\right\}\right)\\
&+O_{n,m, \varepsilon}\left(x^{\frac{mn-2}{2mn}}D_K^{\,\frac1{n}}\sum_{l\le R}\frac{d_K^m(l)}{l^{\frac{mn+2}{2mn}}}\left(\left(\frac Rl\right)^{\frac{mn-2}{2mn}}+1\right)\right)\\
&+O_{n,m, \varepsilon}\left(x^{\frac{mn-2}{2mn}}D_K^{\,\frac1{n}}R^{\frac{mn-2}{2mn}+\varepsilon}\sum_{l> R}\frac{d_K^m(l)}{l^{1+\varepsilon}}\min\left\{\left(\log \frac lR\right)^{-1},\ \left(\frac{Rx}{D_K^m}\right)^{\frac1{2mn}}\right\}\right)\\
&+O_{n,m, \varepsilon}\left(x^{\frac{mn-2}{2mn}+\varepsilon}D_K^{\,\frac1n+\varepsilon}R^{\frac{mn-2}{2mn}+\varepsilon}\right).
\end{align*}
We evaluate three $O$-terms as follows.

\begin{align*}
\intertext{First we consider the first $O$-term. One can estimate $\left(\log \frac Rl\right)^{-1}=O\left(\frac R{R-l}\right)$, so we obtain}
&O_{n,m, \varepsilon}\left(x^{\frac{mn-2}{2mn}}D_K^{\,\frac1{n}}\sum_{l\le R}\frac{d_K^m(l)}{l^{\frac{mn+2}{2mn}}}\min\left\{\left(\log \frac Rl\right)^{-1},\ \left(\frac{Rx}{D_K^m}\right)^{\frac1{2mn}}\right\}\right)\\
=&\hspace{1mm}O_{n,m, \varepsilon}\left(x^{\frac{mn-2}{2mn}}D_K^{\,\frac1{n}}\sum_{l\le [R]-1}\frac{d_K^m(l)}{l^{\frac{mn+2}{2mn}}}\left(\log \frac Rl\right)^{-1}+x^{\frac{mn-2}{2mn}}D_K^{\,\frac1{n}}\sum_{[R]\le l\le R}\frac{d_K^m(l)}{l^{\frac{mn+2}{2mn}}}\left(\frac{Rx}{D_K^m}\right)^{\frac1{2mn}}\right)\\
=&\hspace{1mm}O_{n,m, \varepsilon}\left(x^{\frac{mn-2}{2mn}}D_K^{\,\frac1{n}}\sum_{l\le [R]-1}\frac{d_K^m(l)}{l^{\frac{mn+2}{2mn}}}\frac R{R-l}+x^{\frac{mn-1}{2mn}}D_K^{\,\frac{1}{2n}}R^{\frac{1}{2mn}}\sum_{[R]\le l\le R}\frac{d_K^m(l)}{l^{\frac{mn+2}{2mn}}}\right)\\
=&\hspace{1mm}O_{n,m, \varepsilon}\left(x^{\frac{mn-2}{2mn}}D_K^{\,\frac1{n}}R^{\frac{mn-2}{2mn}+\varepsilon}+x^{\frac{mn-1}{2mn}}D_K^{\,\frac{1}{2n}}R^{-\frac{mn+1}{2mn}}\right).
\end{align*}

\begin{align*}
\intertext{Next we calculate the second $O$-term.}
O_{n,m, \varepsilon}\left(x^{\frac{mn-2}{2mn}}D_K^{\,\frac1{n}}\sum_{l\le R}\frac{d_K^m(l)}{l^{\frac{mn+2}{2mn}}}\left(\left(\frac Rl\right)^{\frac{mn-2}{2mn}}+1\right)\right)=&\hspace{1mm}O_{n,m, \varepsilon}\left(x^{\frac{mn-2}{2mn}}D_K^{\,\frac1{n}}R^{\frac{mn-2}{2mn}}\sum_{l\le R}\frac{d_K^m(l)}{l}\right).\\
\intertext{Since it is well-known that $d_K^m(l)=O(l^\varepsilon)$, we get}
O_{n,m, \varepsilon}\left(x^{\frac{mn-2}{2mn}}D_K^{\,\frac1{n}}\sum_{l\le R}\frac{d_K^m(l)}{l^{\frac{mn+2}{2mn}}}\left(\left(\frac Rl\right)^{\frac{mn-2}{2mn}}+1\right)\right)=&\hspace{1mm}O_{n,m, \varepsilon}\left(x^{\frac{mn-2}{2mn}}D_K^{\,\frac1{n}}R^{\frac{mn-2}{2mn}}\int_1^R\frac{t^\varepsilon}{t}\ dt\right)\\
=&\hspace{1mm}O_{n,m, \varepsilon}\left(x^{\frac{mn-2}{2mn}}D_K^{\,\frac1{n}}R^{\frac{mn-2}{2mn}+\varepsilon}\right).
\end{align*}

\begin{align*}
\intertext{Finally we estimate the third $O$-term in a similar way to calculate the first $O$-term. One can estimate $\left(\log \frac lR\right)^{-1}=O\left(\frac R{l-R}\right)$, so we obtain}
&O_{n,m, \varepsilon}\left(x^{\frac{mn-2}{2mn}}D_K^{\,\frac1{n}}R^{\frac{mn-2}{2mn}+\varepsilon}\sum_{l> R}\frac{d_K^m(l)}{l^{1+\varepsilon}}\min\left\{\left(\log \frac lR\right)^{-1},\ \left(\frac{Rx}{D_K^m}\right)^{\frac1{2mn}}\right\}\right)\\
=&\hspace{1mm}O_{n,m, \varepsilon}\left(x^{\frac{mn-2}{2mn}}D_K^{\,\frac1{n}}R^{\frac{mn-2}{2mn}+\varepsilon}\left(\sum_{R<l\le[R]+1}\frac{d_K^m(l)}{l^{1+\varepsilon}}\left(\frac{Rx}{D_K^m}\right)^{\frac1{2mn}}+\sum_{[R]+2\le l}\frac{d_K^m(l)}{l^{1+\varepsilon}}\left(\log \frac lR\right)^{-1}\right)\right)\\
=&\hspace{1mm}O_{n,m, \varepsilon}\left(x^{\frac{mn-1}{2mn}}D_K^{\,\frac1{2n}}R^{\frac{mn-1}{2mn}+\varepsilon}\sum_{R<l\le[R]+1}\frac{d_K^m(l)}{l^{1+\varepsilon}}+x^{\frac{mn-2}{2mn}}D_K^{\,\frac1{n}}R^{\frac{mn-2}{2mn}+\varepsilon}\sum_{[R]+2\le l}\frac{d_K^m(l)}{l^{1+\varepsilon}}\frac R{l-R}\right)\\
=&\hspace{1mm}O_{n,m, \varepsilon}\left(x^{\frac{mn-1}{2mn}}D_K^{\,\frac1{2n}}R^{-\frac{mn+1}{2mn}+\varepsilon}+x^{\frac{mn-2}{2mn}}D_K^{\,\frac1{n}}R^{\frac{mn-2}{2mn}+\varepsilon}\right).
\end{align*}
From above results, we obtain
\begin{align}
\label{c32}
\frac1{2\pi i}\int_{C_3}\zeta_K(s)^m\frac{x^s}s\ ds=&\hspace{1mm}\frac {Cx^{\frac{mn-1}{2mn}}D_K^{\,\frac1{2n}}}{2\pi i}\sum_{l\le R}\frac{d_K^m(l)}{l^{\frac{mn+1}{2mn}}}{\tau}\left(2n\pi \left(\frac {lx}{D_K^m}\right)^{\frac1{mn}}\right)\nonumber\\[-3mm]
&{}\\[-3mm]
&+O_{n,m, \varepsilon}\left(x^{\frac{mn-1}{2mn}}D_K^{\,\frac1{2n}}R^{-\frac{mn+1}{2mn}+\varepsilon}+x^{\frac{mn-2}{2mn}}D_K^{\,\frac1{n}}R^{\frac{mn-2}{2mn}+\varepsilon}\right).\nonumber
\end{align}

\begin{align*}
\intertext{From estimate (\ref{c3}) and (\ref{c32}), it is obtained that}
\Delta_K^m(x)=&\hspace{1mm}\frac {Cx^{\frac{mn-1}{2mn}}D_K^{\,\frac1{2n}}}{2\pi i}\sum_{l\le R}\frac{d_K^m(l)}{l^{\frac{mn+1}{2mn}}}{\tau}\left(2mn\pi \left(\frac {lx}{D_K^m}\right)^{\frac1{mn}}\right)\\
&+O_{n,m,\varepsilon}\left(x^{\frac{mn-2}{2mn}+\varepsilon}D_K^{\,\frac1n+\varepsilon}R^{\frac{mn-2}{2mn}+\varepsilon}+x^{\frac{mn-1}{mn}+\varepsilon}D_K^{\,\frac1n+\varepsilon}R^{-\frac1{mn}+\varepsilon}\right).\\
\end{align*}
Next we consider the above sum. Let $F_K=d^m_K*\mu$, where $*$ is the Dirichlet product symbol. From lemma \ref{bo}  this becomes 
\begin{align*}
&\Delta_K^m(x)\\
=&\hspace{1mm}O_{n,m, \varepsilon}\left(\begin{array}{l}L^{1-\alpha}+x^{\frac{mn-1}{2mn}}D_K^{\,\frac1{2n}}R^{\varepsilon}\underset{L\le S\le R}{\max}S^{-\frac{mn+1}{2mn}}\times\\
\displaystyle \times\underset{S<S_1\le 2S}{\max}\underset{\substack{M,N\le S_1\\MN\asymp S}}{\max}\underset{\substack{M\le M_1\le 2M\\ N\le N_1\le 2N}}{\max}\left|\sum_{M<l\le M_1}F_K(l)\sum_{N<k\le N_1}{\mbox{\boldmath $e$}}\left(mn\left(\frac{xlk}{D_K^m}\right)^{\frac1{mn}}\right)\right|\\
+x^{\frac{mn-2}{2mn}+\varepsilon}D_K^{\,\frac1n+\varepsilon}R^{\frac{mn-2}{2mn}+\varepsilon}+x^{\frac{mn-1}{mn}+\varepsilon}D_K^{\,\frac1n+\varepsilon}R^{-\frac1{mn}+\varepsilon}
\end{array}\right).
\end{align*}
This proves this proposition.
\end{proof}
Let $\mathcal S_K(x,S)$ be the sum in the $O$-term, that is, \[\underset{L\le S\le R}{\max}S^{-\frac{mn+1}{2mn}}\underset{S<S_1\le 2S}{\max}\underset{\substack{M,N\le S_1\\MN\asymp S}}{\max}\underset{\substack{M\le M_1\le 2M\\ N\le N_1\le 2N}}{\max}\left|\sum_{M<l\le M_1}F_K(l)\sum_{N<k\le N_1}{\mbox{\boldmath $e$}}\left(mn\left(\frac{xlk}{D_K^m}\right)^{\frac1{mn}}\right)\right|.\]
This proposition reduces the initial problem to an exponential sums problem. There are many results to estimate exponential sum. In the next section, we estimate Piltz divisor function by using some results for exponential sum established by many authors.


\section{Estimate of counting function}
In the last section, we show that the error term of Piltz divisor function $\Delta_K^m(x)$ can be expressed as a exponential sum. Let $X > 1$ be a real number, $1\le M < M_1 \le 2M$ and $1\le N<N_1\le 2N$ be integers and $(a_m), (b_n)  \subset \mathbf C$ be sequence of complex numbers, and let $\alpha,\beta\in \mathbf R$ and we define
\begin{equation}\label{expo} 
\mathcal S=\sum_{M<m\le M_1}a_m\sum_{N<n\le N_1}b_n\mbox{\boldmath $e$}\left(X\left(\frac mM\right)^{\alpha}\left(\frac nN\right)^{\beta}\right).
\end{equation}
In 1998 Wu shows this lemma.
\begin{lemma}[Wu \cite{wu98}]
\label{54}
Let $\alpha,\beta\in \mathbf R$ such that $\alpha\beta(\alpha-1)(\beta-1)\not=0$, and $|a_m|\le 1$ and $|b_n| \le 1$ and $\mathcal L=log(XMN + 2)$. Then
\[\mathcal L^{-2}\mathcal S=\hspace{1mm}O\left(\begin{array}{l}
(XM^3N^4)^{\frac15}+(X^4M^{10}N^{11})^{\frac1{16}}+(XM^7N^{10})^{\frac1{11}}\\
+MN^{\frac12}+(X^{-1}M^{14}N^{23})^{\frac1{22}} + X^{-\frac12}MN
\end{array}\right).
\]
\end{lemma}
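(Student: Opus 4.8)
The plan is to dispose of the arbitrary coefficients on one of the two variables by Cauchy--Schwarz and then to treat the resulting one--dimensional inner sum by van der Corput's method. The chief obstruction to applying Poisson summation directly is precisely the presence of the bounded sequences $(a_m)$ and $(b_n)$, which must be removed before any oscillatory cancellation can be exploited. First I would apply Cauchy--Schwarz in the variable $m$, using $\sum_m|a_m|^2\le M$, to obtain
\[
|\mathcal S|^2\le M\sum_{M<m\le M_1}\left|\sum_{N<n\le N_1}b_n\,\mathbf{e}\left(X\left(\frac mM\right)^{\alpha}\left(\frac nN\right)^{\beta}\right)\right|^2 .
\]
Expanding the square over two copies $n_1,n_2$ of the $n$--variable and interchanging the order of summation turns the right--hand side into
\[
M\sum_{n_1,n_2}b_{n_1}\overline{b_{n_2}}\sum_{M<m\le M_1}\mathbf{e}\left(X\left(\frac mM\right)^{\alpha}\left(\left(\frac{n_1}N\right)^{\beta}-\left(\frac{n_2}N\right)^{\beta}\right)\right),
\]
so the inner $m$--sum is now a genuine one--dimensional exponential sum whose phase has amplitude governed by the difference $h(n_1,n_2)=(n_1/N)^{\beta}-(n_2/N)^{\beta}$.

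Second, I would estimate the inner $m$--sum by the theory of exponent pairs. For fixed $n_1\neq n_2$ the first derivative of the phase in $m$ has size $\asymp X|h(n_1,n_2)|/M$, and since $\alpha(\alpha-1)\neq0$ the higher derivatives behave regularly; an exponent pair $(\kappa,\lambda)$ then yields a bound of the schematic shape $(X|h|)^{\kappa}M^{\lambda-\kappa}+M(X|h|)^{-1}$, the last term accounting for the range in which the phase is nearly stationary. The diagonal $n_1=n_2$ makes the inner sum equal to $M$ and, since $\sum_n|b_n|^2\le N$, contributes $M\cdot MN=M^2N$ to $|\mathcal S|^2$, giving after the square root exactly the term $MN^{1/2}$. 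For the off--diagonal terms I would split the range of $|n_1-n_2|$ dyadically; because $\beta(\beta-1)\neq0$ one has $|h(n_1,n_2)|\asymp|n_1-n_2|/N$ on each block, so both the first--derivative contribution $M(X|h|)^{-1}$ and the exponent--pair contribution $(X|h|)^{\kappa}M^{\lambda-\kappa}$ can be summed over $n_1,n_2$ in closed form. Summing $M(X|h|)^{-1}$ over all off--diagonal pairs produces $X^{-1}M^2N^2$ inside $|\mathcal S|^2$, hence the small--$X$ term $X^{-1/2}MN$; the four remaining terms $(XM^3N^4)^{1/5}$, $(X^4M^{10}N^{11})^{1/16}$, $(XM^7N^{10})^{1/11}$ and $(X^{-1}M^{14}N^{23})^{1/22}$ arise from the exponent--pair contribution after optimizing the choice of $(\kappa,\lambda)$ (equivalently, the number and order of the $A$-- and $B$--processes) block by block and taking square roots.

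The main obstacle, as is typical for estimates of this kind, is the bookkeeping of the optimization rather than any single inequality: for each region of the parameters $X,M,N$ and of the difference $|n_1-n_2|$ one must identify which exponent pair is extremal and then verify that the resulting maximum is dominated by the six listed terms and by no others. The hypothesis $\alpha\beta(\alpha-1)(\beta-1)\neq0$ enters exactly here, keeping every relevant derivative nonzero: $\alpha\neq0$ and $\beta\neq0$ make the phase genuinely oscillate in each variable, while $\alpha\neq1$ and $\beta\neq1$ make the second derivatives nonvanishing, which is precisely the input required by the second--derivative test and the $B$--process. Finally, the logarithmic factors enter through the dyadic decomposition of $|n_1-n_2|$ and the dyadic splitting needed to organize the sums; in the squared quantity $|\mathcal S|^2$ they accumulate to a power of $\mathcal L=\log(XMN+2)$ whose square root is $\mathcal L^2$, which accounts for the factor displayed in the statement.
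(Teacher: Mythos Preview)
The paper does not supply a proof of this lemma at all: it is quoted verbatim from Wu~\cite{wu98} as an input, exactly as Lemmas~\ref{55} and~\ref{srr} are quoted from Bordell\`es and Srinivasan, and is used as a black box in the proof of Theorem~\ref{cub}. There is therefore nothing in the paper to compare your argument against.

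That said, your outline is the standard route to estimates of this shape and is broadly in the spirit of Wu's paper: Cauchy--Schwarz in $m$ to eliminate $(a_m)$, expansion of the square to a double sum over $n_1,n_2$ with an inner one--dimensional sum over $m$, separation of the diagonal (yielding $MN^{1/2}$) from the off--diagonal, and then van~der~Corput/exponent--pair estimates on the $m$--sum with amplitude $\asymp X|n_1-n_2|/N$. Two cautions are in order. First, the sentence on logarithms is muddled: the $\mathcal L^2$ outside the bound for $\mathcal S$ corresponds to $\mathcal L^4$ on $|\mathcal S|^2$, not the other way around, and in practice one factor comes from the dyadic dissection in $|n_1-n_2|$ while the other comes from organizing the Weyl steps; you should state this correctly. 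Second, your claim that the four nontrivial terms $(XM^3N^4)^{1/5}$, $(X^4M^{10}N^{11})^{1/16}$, $(XM^7N^{10})^{1/11}$, $(X^{-1}M^{14}N^{23})^{1/22}$ all fall out of ``optimizing the choice of $(\kappa,\lambda)$ block by block'' is where the real content lies, and as written it is only an assertion. In Wu's argument one does not merely pick exponent pairs after a single Cauchy--Schwarz; one also performs a Weyl shift (the $A$--process) in the $n$--variable before squaring, and it is the interplay of that shift parameter with the subsequent exponent pair that produces these particular exponents. If you intend this as a self--contained proof rather than a pointer to~\cite{wu98}, you must exhibit, for each of the four terms, the specific sequence of $A/B$ steps and the range of $|n_1-n_2|$ on which it dominates.
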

Next Bordell\`es also shows this lemma by using estimate for triple exponential sums by Robert and Sargos.
\begin{lemma}[Bordell\`es \cite{bo15}]
\label{55}
Let $\alpha,\beta\in \mathbf R$ such that $\alpha\beta(\alpha-1)(\beta-1)\not=0$, and $|a_m|\le 1$ and $|b_n| \le 1$. If $X=O(M)$ then
\begin{align*}
&(MN)^{-\varepsilon}\mathcal S\\
=&\hspace{1mm}O\left((XM^5N^7)^{\frac18}+N(X^{-2}M^{11})^{\frac1{12}}+(X^{-3}M^{21}N^{23})^{\frac1{24}}+M^{\frac34}N + X^{-\frac14}MN\right).
\end{align*}
\end{lemma}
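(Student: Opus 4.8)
The plan is to regard $\mathcal S$ as a genuinely two-dimensional exponential sum whose phase $X(m/M)^{\alpha}(n/N)^{\beta}$ is a monomial in each variable, to strip off the bounded coefficients $a_m,b_n$ by Cauchy--Schwarz, and then to feed the resulting coefficient-free oscillation into the Robert--Sargos estimate for three-dimensional monomial exponential sums (equivalently, into their counting bound for the associated Diophantine inequality). The hypothesis $\alpha\beta(\alpha-1)(\beta-1)\ne 0$ is exactly what keeps every relevant partial derivative of the phase, and of the phases obtained after differencing, from vanishing identically, so that the non-degeneracy conditions behind the Robert--Sargos bound are met; the hypothesis $X=O(M)$ will be used to control the size of the first $m$-derivative of the phase and so to place the sum in the regime to which that bound applies.

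First I would apply the Cauchy--Schwarz inequality in $m$, using $|a_m|\le 1$ and $M_1-M\ll M$, to get
\[
|\mathcal S|^2\ll M\sum_{M<m\le M_1}\Big|\sum_{N<n\le N_1}b_n\,\mbox{\boldmath $e$}\big(X(m/M)^{\alpha}(n/N)^{\beta}\big)\Big|^2 .
\]
Expanding the square and interchanging summation turns the right-hand side into a sum over the three variables $m,n_1,n_2$ with phase $X(m/M)^{\alpha}N^{-\beta}(n_1^{\beta}-n_2^{\beta})$; the point is that the inner quantity is a mean value that is insensitive to the bounded weights $b_{n_1}\overline{b_{n_2}}$, so it can be controlled by counting the pairs $(n_1,n_2)$ for which the phase derivative in $m$ is small. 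Splitting off the diagonal $n_1=n_2$ yields a contribution absorbed into the remainder terms, while the off-diagonal part is the genuine three-variable oscillatory sum to be estimated.

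Next I would estimate the off-diagonal sum by the Robert--Sargos machinery. Since $n_1^{\beta}-n_2^{\beta}$ is not itself a monomial, the step here is to rescale, writing $n_1=n_2+r$ and extracting a smooth factor from $(n_2+r)^{\beta}-n_2^{\beta}$, so that after a dyadic splitting in $r$ the phase becomes a monomial in the three normalized variables up to a factor whose derivatives are bounded thanks to $X=O(M)$; if necessary one introduces a van der Corput shift to reach the exact shape of the three-dimensional theorem. Applying the estimate and summing back over the auxiliary ranges produces the three main terms $(XM^5N^7)^{1/8}$, $N(X^{-2}M^{11})^{1/12}$ and $(X^{-3}M^{21}N^{23})^{1/24}$, while the lower-order terms of the Robert--Sargos bound and the nearly stationary (small-oscillation) range give the remaining terms $M^{3/4}N$ and $X^{-1/4}MN$. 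Taking square roots and collecting everything yields the stated estimate, with the factor $(MN)^{\varepsilon}$ accounting for the divisor-type losses in the dyadic decompositions.

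The hard part will be two-fold. First, one must arrange the reduction so that the differenced, rescaled phase matches precisely the monomial hypotheses of the Robert--Sargos theorem: the differences $(n_2+r)^{\beta}-n_2^{\beta}$ and, if a shift is used, $(m+h)^{\alpha}-m^{\alpha}$ are only approximately monomial, so one has to control the resulting smooth amplitudes and all their derivatives uniformly, and this is exactly where the condition $X=O(M)$ is indispensable. Second, the bookkeeping of exponents through the Cauchy--Schwarz step, the dyadic splittings, the Robert--Sargos bound, and the final square root---together with the optimization over the shift length and over the point separating the diagonal from the off-diagonal---must be carried out carefully so that the five terms emerge with exactly the exponents claimed.
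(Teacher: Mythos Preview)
The paper does not supply its own proof of this lemma: it is quoted from Bordell\`es \cite{bo15}, accompanied only by the remark that ``Bordell\`es also shows this lemma by using estimate for triple exponential sums by Robert and Sargos.'' Your proposal is consistent with that description---Cauchy--Schwarz in $m$ to remove the weights $a_m$, expansion of the square to produce a three-variable sum over $(m,n_1,n_2)$, and then the Robert--Sargos bound for the resulting monomial-type triple sum---and this is indeed the route taken in \cite{bo15}. So at the level of strategy there is nothing to compare: your outline matches the method the paper attributes to the source.

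One point worth flagging in your write-up: in Bordell\`es' argument the passage to a genuine Robert--Sargos triple sum is not done by writing $n_1=n_2+r$ and Taylor-expanding $(n_2+r)^{\beta}-n_2^{\beta}$, but rather by invoking the Robert--Sargos \emph{spacing/counting} lemma directly, which bounds the number of quadruples $(m_1,m_2,n_1,n_2)$ with $|m_1^{\alpha}n_1^{\beta}-m_2^{\alpha}n_2^{\beta}|$ small. After Cauchy--Schwarz and a Weyl shift in $m$, the inner sum over $m$ is estimated trivially or by Kusmin--Landau, and the remaining count of $(n_1,n_2,h)$ (or equivalently the mean over the differenced phase) is controlled by that spacing result; the hypothesis $X=O(M)$ enters precisely to guarantee the first-derivative condition $|f'(m)|\ll 1$ needed for Kusmin--Landau after the shift. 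Your sketch of ``rescaling to a monomial up to a smooth amplitude'' would also work in principle, but it is more delicate than necessary and is not how the cited proof proceeds; if you follow the counting-lemma route instead, the five displayed terms fall out with less bookkeeping.
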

The following {{Srinivasan}}'s result is important for our estimating $\Delta_K^m(x)$.
\begin{lemma}[{{Srinivasan}} \cite{sr62}]
\label{srr}
Let $N$ and $P$ be positive integers and $u_n\ge0$, $v_p>0$, $A_n$ and $B_p$ denote constants for $1\le n\le N$ and $1\le p\le P$. Then there exists $q$ with properties \[Q_1\le q\le Q_2\]
and \[\sum_{n=1}^NA_nq^{u_n}+\sum_{p=1}^PB_pq^{-v_p}=O\left(\sum_{n=1}^N\sum_{p=1}^P\sqrt[u_n+v_p]{A_n^{v_p}B_p^{u_n}}+\sum_{n=1}^NA_nQ_1^{u_n}+\sum_{p=1}^PB_pQ_2^{-v_p}\right).\]
The constant involved in $O$-symbol is less than $N+P$.  
\end{lemma}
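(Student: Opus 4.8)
The plan is to treat the left-hand side as a function of the free parameter $q$ and to produce a good choice of $q$ explicitly, landing either at one of the endpoints $Q_1,Q_2$ or at an interior point where the increasing and decreasing parts balance. Write $G(q)=\sum_{n=1}^N A_nq^{u_n}$ and $H(q)=\sum_{p=1}^P B_pq^{-v_p}$, so the goal is to find $q\in[Q_1,Q_2]$ making $G(q)+H(q)$ small. Since $u_n\ge 0$ and $v_p>0$ (and all coefficients are nonnegative in the intended application, being magnitudes of error and main terms), $G$ is continuous and non-decreasing while $H$ is continuous and strictly decreasing on $(0,\infty)$. This monotonicity is what drives the whole argument.

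First I would dispose of the two boundary cases. If $G(Q_1)\ge H(Q_1)$, then $G$ dominates throughout $[Q_1,\infty)$ and the choice $q=Q_1$ gives $G(Q_1)+H(Q_1)\le 2G(Q_1)=2\sum_n A_nQ_1^{u_n}$, which is absorbed by the second sum in the $O$-term. Symmetrically, if $G(Q_2)\le H(Q_2)$, then $q=Q_2$ yields $G(Q_2)+H(Q_2)\le 2H(Q_2)=2\sum_p B_pQ_2^{-v_p}$, the third sum. In the remaining case $G(Q_1)<H(Q_1)$ and $G(Q_2)>H(Q_2)$, the intermediate value theorem applied to the continuous increasing function $G-H$ produces an interior $q_0\in(Q_1,Q_2)$ with $G(q_0)=H(q_0)$, whence $G(q_0)+H(q_0)=2G(q_0)$.

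The heart of the proof is to bound this balanced value $G(q_0)$ by the double sum $\sum_{n,p}(A_n^{v_p}B_p^{u_n})^{1/(u_n+v_p)}=\sum_{n,p}T_{n,p}$. The key observation is that each $T_{n,p}$ is the weighted geometric mean
\[T_{n,p}=\bigl(A_nq^{u_n}\bigr)^{\frac{v_p}{u_n+v_p}}\bigl(B_pq^{-v_p}\bigr)^{\frac{u_n}{u_n+v_p}},\]
and that this is independent of $q$ because the powers of $q$ cancel. Writing $a_n=A_nq_0^{u_n}$ and $b_p=B_pq_0^{-v_p}$, so that $\sum_n a_n=\sum_p b_p=G(q_0)$, I would fix $n$ and pick $p^\ast$ maximizing $b_p$; then $b_{p^\ast}\ge P^{-1}\sum_p b_p\ge P^{-1}a_n$, so $a_n/b_{p^\ast}\le P$. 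Since $T_{n,p^\ast}=a_n^{v_{p^\ast}/(u_n+v_{p^\ast})}b_{p^\ast}^{u_n/(u_n+v_{p^\ast})}$, a one-line manipulation gives $a_n/T_{n,p^\ast}=(a_n/b_{p^\ast})^{u_n/(u_n+v_{p^\ast})}\le P$, the exponent lying in $[0,1]$ and $P\ge 1$. Summing over $n$ yields $G(q_0)\le P\sum_{n,p}T_{n,p}$, and the mirror-image argument (maximizing over $n$ instead) gives $H(q_0)\le N\sum_{n,p}T_{n,p}$.

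Combining, in the interior case $G(q_0)+H(q_0)=2G(q_0)\le 2\min(N,P)\sum_{n,p}T_{n,p}\le (N+P)\sum_{n,p}T_{n,p}$, which is the asserted main term with the claimed constant. The main obstacle is exactly this last step: controlling the balanced value by the double sum without losing more than a factor comparable to $N+P$. The naive estimate $\min(G(q_0),H(q_0))\le\sqrt{G(q_0)H(q_0)}$ does not reproduce the terms $T_{n,p}$, so it is the $q$-independence of the geometric mean together with the largest-term selection of $p^\ast$ (resp.\ $n^\ast$) that makes the accounting of the constant work; upgrading the inequality to the strict form $<N+P$ only requires observing that these crude term-by-term bounds cannot all be simultaneously sharp.
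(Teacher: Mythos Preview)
The paper does not supply its own proof of this lemma; it is quoted verbatim from Srinivasan \cite{sr62} and used as a black box in the proof of Theorem~\ref{cub}. So there is nothing in the paper to compare against directly.

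Your argument is correct and is, in fact, the classical one. Splitting into the two boundary cases and the interior balance point via the intermediate value theorem is exactly how Srinivasan proceeds, and the identity
\[
T_{n,p}=\bigl(A_nq^{u_n}\bigr)^{\frac{v_p}{u_n+v_p}}\bigl(B_pq^{-v_p}\bigr)^{\frac{u_n}{u_n+v_p}}
\]
(independent of $q$) is the key observation that makes the balanced value tractable. Your pigeonhole step---picking $p^\ast$ to maximize $b_p$ so that $a_n/b_{p^\ast}\le P$, and then using $0\le u_n/(u_n+v_{p^\ast})<1$ to conclude $a_n\le P\,T_{n,p^\ast}$---is clean and gives the constant $N+P$ as claimed. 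Two small remarks: you are right that the lemma as stated silently assumes $A_n,B_p\ge 0$ (otherwise monotonicity fails), and the strict inequality ``less than $N+P$'' in the original statement is really ``at most $N+P$'' in your argument; when $N=P=1$ the boundary cases give constant exactly $2=N+P$. Neither point affects the application in the paper.
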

{{Srinivasan}} remarks that the inequality in lemma \ref{srr} corresponds to the `best possible' choice of $q$ in the range $Q_1\le q\le Q_2$ \cite{sr62}. We apply lemma \ref{srr} to improve the error term $\Delta_K^m(x)$. 

\begin{theorem}
\label{cub}
For every $\varepsilon>0$ the following estimates hold.
When $mn\ge4$, then  \[\Delta_K^m(x)=O_{n,m,\varepsilon}\left(x^{\frac{2mn-3}{2mn+1}+\varepsilon}D_K^{\,\frac{2m}{2mn+1}+\varepsilon}\right)\]
as $x$ tends to infinity.
\end{theorem}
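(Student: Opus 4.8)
The plan is to start from Proposition \ref{ideal}, which already writes $\Delta_K^m(x)$ as the reduced exponential sum $\mathcal S_K(x,S)$ plus two explicit error terms depending on the cut-off parameters $L$ and $R$. The remaining work splits into three parts: (i) bound the inner double sum, (ii) carry out the maxima over the split $MN\asymp S$ and over $L\le S\le R$, and (iii) choose $L$ and $R$ to balance the result. I would keep the dependence on $D_K$ explicit at every step, since the target exponent $\tfrac{2m}{2mn+1}$ of $D_K$ must come out of the final balancing.

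First I would identify the inner double sum of Proposition \ref{ideal} with the model sum $\mathcal S$ of \eqref{expo}. Factoring the phase as $mn(xlk/D_K^m)^{1/mn}=mn(xMN/D_K^m)^{1/mn}(l/M)^{1/mn}(k/N)^{1/mn}$ shows that it is an instance of \eqref{expo} with $\alpha=\beta=\tfrac1{mn}$ and $X\asymp(xS/D_K^m)^{1/mn}$, because $MN\asymp S$. The nondegeneracy hypothesis $\alpha\beta(\alpha-1)(\beta-1)\neq0$ holds since $0<\tfrac1{mn}<1$, and the coefficients satisfy $|F_K(l)|=O(l^{\varepsilon})=O(S^{\varepsilon})$, so after extracting $S^{\varepsilon}$ they are admissible for Lemmas \ref{54} and \ref{55}.

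Next I would estimate $\mathcal S$ by combining the two exponential sum bounds according to the shape of the split. Lemma \ref{55} requires $X=O(M)$, so I would apply it (after interchanging the roles of $M$ and $N$ when convenient, which is legitimate because the phase is symmetric in $(l/M)$ and $(k/N)$) whenever $\max(M,N)\gg X$, and fall back on the unconditional Lemma \ref{54} in the remaining range $M,N\ll X$. For each monomial produced, I would set $MN\asymp S$, maximize over the split, and substitute $X\asymp(xS/D_K^m)^{1/mn}$. The governing contribution comes from the term $M^{3/4}N$ of Lemma \ref{55} (equivalently its diagonal companion $X^{-1/4}MN$) at the endpoint $M\asymp X$: it equals $\asymp SX^{-1/4}\asymp S^{1-\frac1{4mn}}(x/D_K^m)^{-\frac1{4mn}}$, and one checks that every other monomial from both lemmas is dominated by it in this regime. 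Multiplying by the prefactor $x^{\frac{mn-1}{2mn}}D_K^{\frac1{2n}}S^{-\frac{mn+1}{2mn}}$ of Proposition \ref{ideal} leaves the exponent of $S$ equal to $\tfrac{2mn-3}{4mn}>0$, so the maximum over $L\le S\le R$ is attained at $S=R$, giving a contribution $\asymp x^{\frac{2mn-3}{4mn}}D_K^{\frac3{4n}}R^{\frac{2mn-3}{4mn}}$.

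Finally, collecting this with the explicit error terms of Proposition \ref{ideal} produces a bound that is a sum of increasing powers of $R$ set against the single decreasing power $x^{\frac{mn-1}{mn}}D_K^{\frac1n}R^{-\frac1{mn}}$, while taking $L$ bounded renders the $L^{1-\alpha}$ term harmless. I would then apply Srinivasan's Lemma \ref{srr} with $q=R$ to balance these optimally. Balancing the dominant increasing term against the decreasing one forces $R\asymp x^{\frac{2mn-1}{2mn+1}}D_K^{\frac{m}{2mn+1}}$, and computing the resulting geometric mean yields precisely $x^{\frac{2mn-3}{2mn+1}}D_K^{\frac{2m}{2mn+1}}$, as claimed. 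The hard part will be part (ii): organizing the case analysis over the split $MN\asymp S$ so as to confirm that the $M^{3/4}N$ term truly dominates across both lemmas and all regimes, which is exactly where the hypothesis $mn\ge4$ is needed to prevent the off-diagonal monomials of Lemma \ref{54} from overtaking the main term. A secondary point requiring care is the uniformity in $D_K$: the explicit error terms of Proposition \ref{ideal} contribute, after the same balancing, a term $\asymp x^{\frac{mn-2}{mn}}D_K^{\frac1n}$, whose $x$-exponent is smaller but whose $D_K$-exponent is larger than the target, so it is absorbed into the stated bound exactly in the range where $x$ dominates the appropriate power of $D_K$ (in particular for fixed $K$ as $x\to\infty$).
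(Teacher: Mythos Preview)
Your plan follows the paper's strategy closely: Proposition~\ref{ideal} to reduce to exponential sums, then Lemmas~\ref{54} and~\ref{55} for the double sum, then Lemma~\ref{srr} to balance in $R$. You also correctly identify the $M^{3/4}N$ (equivalently $X^{-1/4}MN$) term of Lemma~\ref{55} as the one whose balanced contribution yields $x^{\frac{2mn-3}{2mn+1}}D_K^{\frac{2m}{2mn+1}}$.

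The one organizational difference is your case split. The paper introduces a free parameter $0\le\alpha\le\tfrac13$ and separates the four ranges $N\ll S^{\alpha}$, $S^{\alpha}\ll N\ll S^{1/2}$, $S^{1/2}\ll N\ll S^{1-\alpha}$, $S^{1-\alpha}\ll N$, applying Lemma~\ref{54} in the middle two and Lemma~\ref{55} in the extreme two. Because the hypothesis $X=O(N)$ of Lemma~\ref{55} is not automatic from $N\gg S^{1-\alpha}$, the paper is forced to take $L\asymp (x/D_K^m)^{1/(mn(1-\alpha)-1)}$, and this $L$-term appears among the final contributions. After Srinivasan's lemma the paper obtains a list of twenty-one monomials in $x$ and $D_K$, and only the specific choice $\alpha=\frac{mn+3}{7mn-5}$ forces all of them below the target when $mn\ge4$. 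Your threshold $\max(M,N)\asymp X$ makes Lemma~\ref{55}'s hypothesis automatic, which is why you can take $L$ bounded; this is a legitimate and somewhat cleaner variant. Note, though, that in Proposition~\ref{ideal} the $L^{1-a}$ term really carries the prefactor $x^{\frac{mn-1}{2mn}}D_K^{\frac1{2n}}$ (it comes from Lemma~\ref{bo} applied after that factor is pulled out), so ``$L$ bounded'' leaves a contribution $x^{\frac{mn-1}{2mn}}D_K^{\frac1{2n}}$, which you should check is below the target (it is, for $mn\ge4$).

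Where your sketch is thin is exactly where the paper does its real work: the sentence ``one checks that every other monomial from both lemmas is dominated by it in this regime'' hides the entire computation. With your split, in the Lemma~\ref{54} regime the small variable is bounded below by $S/X$ (not by $S^{\alpha}$), and in the Lemma~\ref{55} regime the small variable is bounded above by $S/X$ (not by $S^{\alpha}$); you must substitute these endpoints into every monomial of both lemmas, multiply by $S^{-\frac{mn+1}{2mn}}$, maximize over $S\le R$, run Srinivasan against the decreasing term $x^{\frac{mn-1}{mn}}D_K^{\frac1n}R^{-\frac1{mn}}$, and verify each resulting exponent is at most $\frac{2mn-3}{2mn+1}$ for $mn\ge4$. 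This is routine but not short, and it is precisely where the hypothesis $mn\ge4$ enters; do not expect it to collapse to a one-line check.
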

\begin{proof}
We note that 
\begin{align*}
&\left|\sum_{M<l\le M_1}F_K(l)\sum_{N<k\le N_1}{\mbox{\boldmath $e$}}\left(mn\left(\frac{xlk}{D_K^m}\right)^{\frac1{mn}}\right)\right|\\
=&\hspace{1mm}\left|\sum_{M<l\le M_1}F_K(l)\sum_{N<k\le N_1}{\mbox{\boldmath $e$}}\left(mn\left(\frac{xMN}{D_K^m}\right)^{\frac1{mn}}\left(\frac lM\right)^{\frac1{mn}}\left(\frac kN\right)^{\frac1{mn}}\right)\right|.
\end{align*}
We use the above lemmas with $X=mn\left(\frac{xMN}{D_K^m}\right)^{\frac1{mn}}>0$.
Let $0\le\alpha\le\frac13$, we consider four cases:

\begin{center}
\begin{tabular}{cl}
\\
\hline
Case 1. &$S^{\alpha}\ll N\ll S^{\frac12}$\\
Case 2. &$S^{\frac12}\ll N\ll S^{1-\alpha}$\\
Case 3. &$S^{1-\alpha}\ll N$\\
Case 4. &$N\ll S^{\alpha}$
\end{tabular}
\end{center}

When $S^{\alpha}\ll N\ll S^{\frac12}$, we apply lemma \ref{54} and this gives 
\begin{align}\label{case1}
&S^{-\varepsilon}x^{\frac{mn-1}{2mn}}D_K^{\,\frac1{2n}}\mathcal S_K(x,S)\nonumber\\[-3mm]
&{}\\[-3mm]
=&\hspace{1mm}O_{n,m, \varepsilon}\left(\begin{array}{l}
x^{\frac{5mn-3}{10mn}}D_K^{\,\frac3{10n}}R^{\frac{2mn-3}{10mn}}+x^{\frac{2mn-1}{4mn}}D_K^{\,\frac1{4n}}R^{\frac{5mn-8}{32mn}}\\
+x^{\frac{11mn-9}{22mn}}D_K^{\,\frac9{22n}}R^{\frac{6mn-9}{22mn}}+x^{\frac{mn-1}{2mn}}D_K^{\,\frac1{2n}}R^{\frac{mn-1}{2mn}-\frac12\alpha}\\
+x^{\frac{11mn-12}{22mn}}D_K^{\,\frac6{11n}}R^{\frac{15mn-24}{44mn}}+x^{\frac{mn-2}{2mn}}D_K^{\,\frac1{n}}R^{\frac{mn-2}{2mn}}
\end{array}\right).\nonumber
\end{align}
When  $S^{\frac12}\ll N\ll S^{1-\alpha}$ we use lemma \ref{54} again reversing the role of $M$ and $N$. We obtain the same estimate for the case that $S^{\alpha}\ll N\ll S^{\frac12}$.

\noindent
For the case 3, we use lemma \ref{55}
\begin{align}
\label{case3}
&S^{-\varepsilon}x^{\frac{mn-1}{2mn}}D_K^{\,\frac1{2n}}\mathcal S_K(x,S)\nonumber\\[-3mm]
&{}\\[-3mm]
=&\hspace{1mm}O_{n,m, \varepsilon}\left(\begin{array}{l}
x^{\frac{4mn-3}{8mn}}D_K^{\,\frac3{8n}}R^{\frac{mn-3}{8mn}+\frac14\alpha}+x^{\frac{3mn-4}{6mn}}D_K^{\,\frac2{3n}}R^{\frac{5mn-8}{12mn}+\frac1{12}\alpha}\\
+x^{\frac{4mn-5}{8mn}}D_K^{\,\frac5{8n}}R^{\frac{3mn-5}{8mn}-\frac1{12}\alpha}\\
+x^{\frac{mn-1}{2mn}}D_K^{\,\frac1{2n}}R^{\frac{mn-2}{4mn}+\frac14\alpha}+x^{\frac{2mn-3}{4mn}}D_K^{\,\frac3{4n}}R^{\frac{2mn-3}{4mn}}
\end{array}\right).\nonumber
\end{align}
If $x^{\frac1{mn(1-\alpha)-1}}D_K^{-\frac m{mn(1-\alpha)-1}}\ll S$,  the condition of Lemma \ref{55} $X=O(N)$ is satisfied. Therefore it suffices to choose $L=x^{\frac1{mn(1-\alpha)-1}}D_K^{-\frac m{mn(1-\alpha)-1}}$.
For the case 4, we use Lemma \ref{55} again reversing the role of $M$ and $N$. We obtain the same estimate for the case that $N\ll S^{\alpha}$.
Combining (\ref{case1}) and (\ref{case3}) with proposition \ref{ideal}, we obtain
\begin{equation}
\label{ue}
\Delta_K^m(x)=O_{n,m, \varepsilon}\left(\begin{array}{l}
x^{\frac{5mn-3}{10mn}}D_K^{\,\frac3{10n}}R^{\frac{2mn-3}{10mn}+\varepsilon}+x^{\frac{2mn-1}{4mn}}D_K^{\,\frac1{4n}}R^{\frac{5mn-8}{32mn}+\varepsilon}\\
+x^{\frac{11mn-9}{22mn}}D_K^{\,\frac9{22n}}R^{\frac{6mn-9}{22mn}+\varepsilon}+x^{\frac{mn-1}{2mn}}D_K^{\,\frac1{2n}}R^{\frac{mn-1}{2mn}-\frac12\alpha+\varepsilon}\\
+x^{\frac{11mn-12}{22mn}}D_K^{\,\frac6{11n}}R^{\frac{15mn-24}{44mn}+\varepsilon}+x^{\frac{mn-2}{2mn}}D_K^{\,\frac1{n}}R^{\frac{mn-2}{2mn}+\varepsilon}\\
+x^{\frac{4mn-3}{8mn}}D_K^{\,\frac3{8n}}R^{\frac{mn-3}{8mn}+\frac14\alpha+\varepsilon}+x^{\frac{3mn-4}{6mn}}D_K^{\,\frac2{3n}}R^{\frac{5mn-8}{12mn}+\frac1{12}\alpha+\varepsilon}\\
+x^{\frac{4mn-5}{8mn}}D_K^{\,\frac5{8n}}R^{\frac{3mn-5}{8mn}+\frac1{12}\alpha+\varepsilon}+x^{\frac{2mn-3}{4mn}}D_K^{\,\frac3{4n}}R^{\frac{2mn-3}{4mn}+\varepsilon}\\
+x^{\frac{mn-1}{mn}+\varepsilon}D_K^{\,\frac1n+\varepsilon}R^{-\frac1{mn}+\varepsilon}+x^{\frac{1-\alpha}{mn(1-\alpha)-1}}D_K^{-\frac {m(1-\alpha)}{mn(1-\alpha)-1}}
\end{array}\right).
\end{equation}
By lemma \ref{srr} with $x^{\frac1{mn(1-\alpha)-1}}D_K^{-\frac m{mn(1-\alpha)-1}}\le R\le xD$ there exists $R$ such that the error term of estimate (\ref{ue}) is much less than
\[\begin{array}{l}
x^{\frac{2mn}{2mn+7}+\varepsilon}D_K^{\,\frac{2m}{2mn+7}+\varepsilon}+x^{\frac{5mn+3}{5mn+24}+\varepsilon}D_K^{\,\frac{5m}{5mn+24}+\varepsilon}+x^{\frac{6mn-4}{6mn+13}+\varepsilon}D_K^{\,\frac{6m}{6mn+13}+\varepsilon}\\
+x^{\frac{(1-\alpha)mn+\alpha-1}{(1-\alpha)mn+1}+\varepsilon}D_K^{\,\frac{(1-\alpha)m}{(1-\alpha)mn+1}+\varepsilon}
+x^{\frac{15mn-17}{15mn+20}+\varepsilon}D_K^{\,\frac{3m}{3mn+4}+\varepsilon}+x^{\frac{mn-2}{mn}+\varepsilon}D_K^{\,\frac {1}{n}+\varepsilon}\\+x^{\frac{(2\alpha+1)mn-2\alpha}{(2\alpha+1)mn+5}+\varepsilon}D_K^{\,\frac{(2\alpha+1)m}{(2\alpha+1)mn+5}+\varepsilon}+x^{\frac{(\alpha+5)mn-\alpha-7}{(\alpha+5)mn+4}+\varepsilon}D_K^{\,\frac{(\alpha+5)m}{(\alpha+5)mn+4}+\varepsilon}\\
+x^{\frac{(2\alpha+9)mn-2\alpha-12}{(2\alpha+9)mn+9}+\varepsilon}D_K^{\,\frac{(2\alpha+9)m}{(2\alpha+9)mn+9}+\varepsilon}+x^{\frac{2mn-3}{2mn+1}+\varepsilon}D_K^{\,\frac{2m}{2mn+1}+\varepsilon}\\
+x^{\frac{5mn(1-\alpha)-6+3\alpha}{10mn(1-\alpha)-10}+\varepsilon}D_K^{\,\frac{m-3m\alpha}{10mn(1-\alpha)-10}+\varepsilon}+x^{\frac{16mn(1-\alpha)-19+8\alpha}{32mn(1-\alpha)-32}+\varepsilon}D_K^{\,\frac{3m-8m\alpha}{32mn(1-\alpha)-32}+\varepsilon}\\
+x^{\frac{11mn(1-\alpha)-14+9\alpha}{22mn(1-\alpha)-22}+\varepsilon}D_K^{\,\frac{3m-9m\alpha}{22mn(1-\alpha)-22}+\varepsilon}+x^{\frac{1}{2}+\varepsilon}\\
+x^{\frac{22mn(1-\alpha)-31+24\alpha}{44mn(1-\alpha)-44}+\varepsilon}D_K^{\,\frac{9m-24m\alpha}{44mn(1-\alpha)-44}+\varepsilon}+x^{\frac{mn(1-\alpha)-2+2\alpha}{2mn(1-\alpha)-2}+\varepsilon}D_K^{\,\frac{m-2m\alpha}{2mn(1-\alpha)-2}+\varepsilon}\\
+x^{\frac{4mn(1-\alpha)-6+5\alpha}{8mn(1-\alpha)-8}+\varepsilon}D_K^{\,\frac{2m-5m\alpha}{8mn(1-\alpha)-8}+\varepsilon}+x^{\frac{6mn(1-\alpha)-9+9\alpha}{12mn(1-\alpha)-12}+\varepsilon}D_K^{\,\frac{3m-9m\alpha}{12mn(1-\alpha)-12}+\varepsilon}\\
+x^{\frac{12mn(1-\alpha)-18+17\alpha}{24mn(1-\alpha)-24}+\varepsilon}D_K^{\,\frac{6m-17m\alpha}{24mn(1-\alpha)-24}+\varepsilon}+x^{\frac{2mn(1-\alpha)-3+3\alpha}{4mn(1-\alpha)-4}+\varepsilon}D_K^{\,\frac{m-3m\alpha}{4mn(1-\alpha)-4}+\varepsilon}\\
+x^{\frac{1-\alpha}{mn(1-\alpha)-1}}D_K^{-\frac {m(1-\alpha)}{mn(1-\alpha)-1}}.
\end{array}\]
When $mn\ge4$ and $\alpha=\frac{mn+3}{7mn-5}$, then we have 
\[\Delta_K^m(x)=O_{n,m,\varepsilon}\left(x^{\frac{2mn-3}{2mn+1}+\varepsilon}D_K^{\,\frac{2m}{2mn+1}+\varepsilon}\right).\]
This proves the theorem. 
\end{proof}
For $mn\ge4$ this theorem gives new results for Piltz divisor problem over number field. In particular, if we {fix} $K$ with $[K:\mathbf Q]=4$ then we improve the estimate for $\Delta_K(x)$ as follows:
\begin{corollary}
For any number field $K$ with $[K:\mathbf Q]=4$, \[\Delta_K(x)=O_{K,\varepsilon}\left(x^{\frac59+\varepsilon}\right).\]
\end{corollary}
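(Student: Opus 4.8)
The plan is to obtain this statement as an immediate specialization of Theorem \ref{cub}, so the real work is simply to check that the hypotheses apply and to track the dependence on $D_K$. First I would recall the convention fixed in the introduction that $\Delta_K(x)=\Delta_K^1(x)$, so the ideal counting case corresponds to $m=1$. Since $[K:\mathbf Q]=4$ gives $n=4$, we have $mn=4\ge 4$, which is exactly the range in which Theorem \ref{cub} is valid.

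Next I would substitute $m=1$ and $n=4$ into the bound of Theorem \ref{cub}. The $x$-exponent becomes $\frac{2mn-3}{2mn+1}=\frac{8-3}{8+1}=\frac59$, and the discriminant exponent becomes $\frac{2m}{2mn+1}=\frac29$. Hence Theorem \ref{cub} yields
\[\Delta_K(x)=O_{n,m,\varepsilon}\left(x^{\frac59+\varepsilon}D_K^{\,\frac29+\varepsilon}\right),\]
where the implied constant depends only on $n=4$, $m=1$, and $\varepsilon$.

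Finally, since the corollary fixes the field $K$, the absolute discriminant $D_K$ is a constant depending only on $K$, so the factor $D_K^{\,\frac29+\varepsilon}$ may be absorbed into the implied constant. This upgrades the dependence from $O_{n,m,\varepsilon}$ to $O_{K,\varepsilon}$ and leaves
\[\Delta_K(x)=O_{K,\varepsilon}\left(x^{\frac59+\varepsilon}\right),\]
as claimed. There is no genuine obstacle in this argument: all of the analytic content—the reduction to exponential sums in Proposition \ref{ideal} and the optimization via Lemma \ref{srr}—is already packaged inside Theorem \ref{cub}. The corollary is just the arithmetic verification that the choice $m=1$, $n=4$ produces the exponent $\frac59$, together with the observation that fixing $K$ turns the explicit $D_K$-dependence into part of the implied constant.
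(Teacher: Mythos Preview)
Your proposal is correct and matches the paper's own treatment: the corollary is stated there as an immediate specialization of Theorem \ref{cub} with $m=1$, $n=4$, yielding the exponent $\tfrac{2\cdot4-3}{2\cdot4+1}=\tfrac59$, and the $D_K$-factor is absorbed into the $O_{K,\varepsilon}$-constant once $K$ is fixed. No additional argument is given in the paper.
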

This result is better than Bordell\`es' result.


\section{Application}
In this section we introduce some applications of our theorems. First we obtain uniform estimate for ideal counting function $I_K(x)$. From the proof of theorem \ref{cub}, we obtain the following theorem.
\begin{theorem}
\label{cubi}
For all $\varepsilon> 0$ for any fixed $0\le\beta\le\frac8{2n+5}-\varepsilon$ and $C>0$ the followings hold.
If $K$ runs through number fields with $[K:\mathbf Q]\le n$ and $D_K\le Cx^{\beta}$ then
\[\Delta_K(x)=O_{C,n,\varepsilon}\left(x^{\frac{2n-3+2\beta}{2n+1}+\varepsilon}\right).\]
\end{theorem}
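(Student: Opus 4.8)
The plan is to specialize the analysis behind Theorem \ref{cub} to the case $m=1$ and then feed in the size constraint $D_K\le Cx^{\beta}$. Concretely, for a field of degree exactly $n$ with $n\ge 4$, Theorem \ref{cub} (with $m=1$) already supplies
\[
\Delta_K(x)=O_{n,\varepsilon}\!\left(x^{\frac{2n-3}{2n+1}+\varepsilon}D_K^{\,\frac{2}{2n+1}+\varepsilon}\right),
\]
and I would simply substitute $D_K\le Cx^{\beta}$ into this bound. The $D_K$-power then becomes a power of $x$: $D_K^{\frac{2}{2n+1}+\varepsilon}\ll_C x^{\beta(\frac{2}{2n+1}+\varepsilon)}$, and since $\beta$ is bounded the stray $\beta\varepsilon$ is absorbed into a fresh $\varepsilon$. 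This turns the main term into $x^{\frac{2n-3+2\beta}{2n+1}+\varepsilon}$, which is exactly the claimed exponent.

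The delicate point is that substituting $D_K\approx x^{\beta}$ changes the comparison between the various error terms produced in the proof of Theorem \ref{cub}, since each carries a different power of $D_K$. I would therefore re-examine the full list obtained after Lemma \ref{srr} (specialized to $m=1$), replace every factor $D_K^{\,b}$ by $x^{\beta b}$, and verify that the term $x^{\frac{2n-3}{2n+1}}D_K^{\frac{2}{2n+1}}$ remains dominant. The bound $\beta\le\frac{8}{2n+5}$ is precisely the threshold for this: a short computation shows $(2n-3)(2n+5)+16=(2n+1)^2$, so at $\beta=\frac{8}{2n+5}$ the main exponent $\frac{2n-3+2\beta}{2n+1}$ collapses to $\frac{2n+1}{2n+5}$, which is the value at which a steeper (more $D_K$-heavy) competitor in the list catches up. For $\beta$ below this threshold I expect the main term to win, possibly after re-choosing the free parameter $\alpha$ and the Srinivasan range for $R$ as functions of $\beta$ so that the optimizing $R$ stays interior.

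To pass from degree exactly $n$ to all $[K:\mathbf Q]\le n$, I would use monotonicity: writing the exponent as $g(d)=\frac{2d-3+2\beta}{2d+1}=1-\frac{4-2\beta}{2d+1}$, it is increasing in the degree $d$ whenever $\beta<2$, so a field of degree $d\le n$ obeys the smaller bound $x^{g(d)+\varepsilon}\le x^{g(n)+\varepsilon}$. Moreover the admissible range for degree $d$ is $\beta\le\frac{8}{2d+5}$, which contains $\frac{8}{2n+5}$ for every $d\le n$, so the single hypothesis $\beta\le\frac{8}{2n+5}$ suffices uniformly. The finitely many low degrees $d\le 3$, where Theorem \ref{cub} does not apply, would be disposed of separately from the known uniform-in-$D_K$ estimates for $I_K(x)$ (for $d=1$ trivially, since $\Delta_{\mathbf Q}(x)=O(1)$), checking that each fits under the degree-$n$ envelope. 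Finally, because the implied constants in Theorem \ref{cub} depend only on the degree and $\varepsilon$, and only finitely many degrees occur, taking the maximum yields a constant depending on $C,n,\varepsilon$ alone.

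The main obstacle I anticipate is the uniform domination step: unlike in Theorem \ref{cub}, where $D_K$ is an inert parameter and the optimization over $R$ and $\alpha$ is carried out once, here $D_K$ scales with $x$, so the balance point of Lemma \ref{srr} moves and one must confirm that no auxiliary term overtakes the main term anywhere in $0\le\beta\le\frac{8}{2n+5}$ and that the optimal $R$ does not run into the endpoints of its range. Pinning down that the threshold is exactly $\frac{8}{2n+5}$ — rather than merely some admissible value — is the crux of the bookkeeping.
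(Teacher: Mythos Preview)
Your approach is essentially the same as the paper's. The paper offers no standalone proof of Theorem \ref{cubi}; it simply states that the result follows ``from the proof of theorem \ref{cub}'' and remarks afterward that the restriction on $\beta$ ``is caused by the relation between the principal term and the error term.'' Your plan --- specialize Theorem \ref{cub} to $m=1$, insert $D_K\le Cx^{\beta}$, and then re-inspect the full list of error terms after Lemma \ref{srr} to verify that the term $x^{\frac{2n-3}{2n+1}}D_K^{\frac{2}{2n+1}}$ remains dominant --- is exactly this. Your identification of $\beta=\frac{8}{2n+5}$ as the crossover point (via the identity $(2n-3)(2n+5)+16=(2n+1)^2$) and your monotonicity argument for passing from degree $n$ to degrees $\le n$ are both correct and more explicit than anything the paper supplies. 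The only caveat is the treatment of degrees $d\le 3$: the paper does not address this either, and your appeal to ``known uniform-in-$D_K$ estimates'' would need to be made precise if one wanted a fully self-contained argument, but this is not a divergence from the paper so much as a gap the paper also leaves open.
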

The condition $D_K\le Cx^{\beta}$ is caused by the relation between the principal term and the error term.
 It is well known that $I_K(x)$ is very important to estimate the distribution of relatively $r$-prime lattice points. We regard an $\ell$-tuple of ideals $(\mathfrak{a}_1, \mathfrak{a}_2,\ldots,\mathfrak{a}_\ell)$ of $\mathcal{O}_K$ as a lattice point in $K^\ell$.  We say that a lattice point $(\mathfrak{a}_1, \mathfrak{a}_2,\ldots,\mathfrak{a}_\ell)$ is {\it relatively $r$-prime} for a positive integer $r$, if there exists no prime ideal $\mathfrak{p}$ such that $\mathfrak{a}_1, \mathfrak{a}_2,\ldots,\mathfrak{a}_\ell\subset \mathfrak{p}^r$. Let $V^r_\ell(x,K)$ denote the number of relatively $r$-prime lattice points $(\mathfrak{a}_1, \mathfrak{a}_2,\ldots,\mathfrak{a}_\ell)$ such that their ideal norm $\mathfrak{Na}_i\le x$.

B. D. Sittinger shows that \[V^r_\ell(x,K)\sim\frac{\rho_K^\ell}{\zeta_K(r\ell)}x^\ell,\]
where $\rho_K$ is the residue of $\zeta_K$ as $s=1$ \cite{St10}.
{It is well known that
\begin{equation}
\rho_K=\frac{2^{r_1}(2\pi)^{r_2}h_KR_K}{w_K\sqrt{D_K}},\label{crho}
\end{equation}
where $h_K$ is the class number of $K$, $R_K$ is the regulator of $K$ and $w_K$ is the number of roots of unity in $\mathcal{O}^*_K$.}

After that we show some results for the error term:\[E_\ell^r(x,K)=V_\ell^r(x,K)-\frac{\rho_K^\ell}{\zeta_K(r\ell)}x^\ell.\]
In \cite{Ta16} and \cite{tk17} we consider the relation between relatively $r$-prime problem and other mathematical problems. 
If we assume the Lindel\"{o}f Hypothesis for $\zeta_K(s)$, then it holds that for all $\varepsilon> 0$
\begin{equation}
E_\ell^r(x,K)=\left\{
\begin{array}{ll}
O_{\varepsilon}\left(x^{\frac1r(\frac32+\varepsilon)}\right)&\text{ if } r\ell=2,\\
O_{\varepsilon}\left(x^{\ell-\frac12+\varepsilon}\right)&\text{ otherwise }
\end{array}
\right.
\end{equation}
From easy calculation, we obtain the following corollary.
\begin{corollary}
For all $\varepsilon> 0$ and for any fixed $0\le\beta\le\frac8{2n+5}-\varepsilon$ and $C>0$ the followings hold.
If $K$ runs through number fields with $[K:\mathbf Q]\le n$ and $D_K\le Cx^{\beta}$, then \[E_\ell^r(x,K)=\left\{\begin{array}{ll}
O_{C, n,\varepsilon}\left(x^{\frac{4n-2}{r(2n+1)}+\frac{4}{2n+1}\beta+\varepsilon}\right)& \text{ if } r\ell=2,\\
O_{C,n,\varepsilon}\left(x^{\ell-\frac {4}{2n+1}+\frac{2n+5-(2n+1)\ell}{2(2n+1)}\beta+\varepsilon}\right)& \text{ otherwise. }
\end{array}\right.\]
\end{corollary}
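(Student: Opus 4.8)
The plan is to reduce $V_\ell^r(x,K)$ to the ordinary ideal counting function $I_K$ by Möbius inversion over ideals and then feed in the bounds of Theorem~\ref{cub} and Theorem~\ref{cubi}. A lattice point $(\mathfrak a_1,\dots,\mathfrak a_\ell)$ fails to be relatively $r$-prime precisely when some prime ideal $\mathfrak p$ divides every $\mathfrak a_i$ to order at least $r$; sieving out these common $r$-th power divisors gives
\[
V_\ell^r(x,K)=\sum_{\mathfrak d}\mu_K(\mathfrak d)\,I_K\!\left(\frac{x}{(\mathfrak N\mathfrak d)^{r}}\right)^{\!\ell},
\]
where $\mu_K$ is the Möbius function on integral ideals and the sum is effectively restricted to $(\mathfrak N\mathfrak d)^{r}\le x$. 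Writing $I_K(y)=\rho_K y+\Delta_K(y)$ and expanding the $\ell$-th power, the contribution of $(\rho_K y)^\ell$ summed over all $\mathfrak d$ rebuilds the Sittinger main term $\rho_K^\ell x^\ell/\zeta_K(r\ell)$, so that $E_\ell^r(x,K)$ splits into the tail left by completing the $\mathfrak d$-sum together with the cross terms
\[
\binom{\ell}{j}\rho_K^{\,\ell-j}x^{\ell-j}\sum_{\mathfrak d}\frac{\mu_K(\mathfrak d)}{(\mathfrak N\mathfrak d)^{r(\ell-j)}}\,\Delta_K\!\left(\frac{x}{(\mathfrak N\mathfrak d)^{r}}\right)^{\!j},\qquad 1\le j\le\ell.
\]

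Next I would show that the linear term $j=1$ governs the size of the error. Inserting the $D_K$-explicit bound $\Delta_K(y)\ll_{n,\varepsilon} y^{\theta}D_K^{2/(2n+1)+\varepsilon}$ of Theorem~\ref{cub} (with $m=1$), where $\theta=(2n-3)/(2n+1)$, the $j$-th cross term carries $x^{\ell-j(1-\theta)}$, and since $1-\theta=4/(2n+1)>0$ this is largest at $j=1$, producing the leading power $x^{\ell-4/(2n+1)}$. The accompanying ideal-indexed Dirichlet sum $\sum_{\mathfrak N\mathfrak d\le x^{1/r}}(\mathfrak N\mathfrak d)^{-s}$ with $s=r(\ell-1+\theta)$ I would control uniformly in $K$ through the elementary estimates $\#\{\mathfrak N\mathfrak a\le t\}\ll\rho_K t$ and $\zeta_K(s)\le\zeta(s)^{n}$: throughout the range $r\ell>2$ the exponent $s$ exceeds $1$, so the sum is $O_{n}(1)$, whereas at the single boundary value $r\ell=2$ the sum sits at the edge of convergence and contributes an extra power of $x$, which is exactly what forces the separate first line of the corollary. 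Finally I would pass the prefactor $\rho_K^{\ell-1}$ through the class number formula~(\ref{crho}): each factor of $\rho_K$ supplies a $D_K^{-1/2}$, so that together with the $D_K^{2/(2n+1)}$ from $\Delta_K$ and the hypothesis $D_K\le Cx^{\beta}$ the total exponent of $\beta$ becomes $\frac{2}{2n+1}-\frac{\ell-1}{2}=\frac{2n+5-(2n+1)\ell}{2(2n+1)}$, as claimed in the second line.

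The routine part is the bookkeeping: the main-term tail and the higher cross terms $j\ge 2$ should all be of strictly smaller order than the $j=1$ contribution and hence absorbed into the stated exponent. The step I expect to require the most care is making every estimate uniform over the whole family $\{K:[K:\mathbf Q]\le n,\ D_K\le Cx^{\beta}\}$. In particular, the uniform estimate of Theorem~\ref{cubi} is only available when its argument $y$ still satisfies $D_K\le Cy^{\beta}$, so for the larger $\mathfrak d$ the value $y=x/(\mathfrak N\mathfrak d)^{r}$ violates this hypothesis and must be handled by a cruder, trivial estimate for $\Delta_K$; one has to check that this truncation, the accumulation of the $\varepsilon$-losses over the $\mathfrak d$-sum, and the constant $C$ do not spoil the final bound. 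Tracking the $D_K^{-1/2}$ hidden in each $\rho_K$ through~(\ref{crho}), while respecting the constraint $0\le\beta\le\frac{8}{2n+5}-\varepsilon$ inherited from Theorem~\ref{cubi}, is the delicate heart of the computation.
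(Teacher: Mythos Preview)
Your M\"obius-inversion-then-binomial-expansion scheme is precisely the standard route, and in fact the paper does not supply an argument at all: it simply directs the reader to the proof of Theorem~4.1 of \cite{tk17}. So at the level of strategy you are doing what the paper intends, and your identification of the $j=1$ cross term as the dominant one, together with the split at $r\ell=2$ coming from the edge of convergence of the $\mathfrak d$-sum, is correct.

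There is, however, a genuine gap in your handling of the residue $\rho_K$. You assert that ``each factor of $\rho_K$ supplies a $D_K^{-1/2}$'' via the class number formula~(\ref{crho}), but that formula carries $h_KR_K$ in the numerator, and by Brauer--Siegel one has $h_KR_K=D_K^{1/2+o(1)}$; the uniform upper bound is $\rho_K\ll_{n,\varepsilon}D_K^{\varepsilon}$ (indeed $\rho_K\ll_n(\log D_K)^{n-1}$), not $D_K^{-1/2}$. Feeding the correct bound $\rho_K\ll D_K^{\varepsilon}$ into the $j=1$ cross term gives a total $D_K$-exponent of $2/(2n+1)+\varepsilon$ and hence a $\beta$-coefficient of $2/(2n+1)$, independent of $\ell$. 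For every $\ell\ge2$ this is strictly larger than the coefficient $\tfrac{2n+5-(2n+1)\ell}{2(2n+1)}$ asserted in the second line of the corollary, so your derivation of that exponent does not go through as written. Either an extra saving in $D_K$ must be found elsewhere in the argument, or the $\beta$-dependence has to be weakened; note that \cite{tk17} treats \emph{abelian} extensions, where sharper control on $\rho_K$ through Dirichlet $L$-values is available, so you should verify whether the argument there actually transfers to arbitrary $K$ with the exponent stated here.
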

For the proof of this corollary, please see the proof of Theorem 4.1 of \cite{tk17}.

\section{Conjecture}
Theorem \ref{cubi} states good uniform upper bounds. It is proposed that for all number fields $K$ the best uniform upper bound of the error term is better than that on the assumption of the Lindel\"  of Hypothesis (\ref{lindelof}). 

\begin{conj}
If $K$ runs through number fields with $D_K<x$, then 
\[\Delta_K^m(x)=o\left(x^{\frac12}\right).\]
\end{conj}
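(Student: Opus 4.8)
The plan is to establish the conjecture in the accessible range $mn\le 3$ and to isolate $mn\ge 4$ as the genuine obstruction. The common starting point is Proposition \ref{ideal}: since $D_K<x$, I would feed a suitable exponential-sum estimate into $\mathcal S_K(x,S)$ and optimise over the parameters $L$ and $R$ (and the splitting exponent $\alpha$) exactly as in the proof of Theorem \ref{cub}. The essential simplification provided by the hypothesis $D_K<x$ is that every two-variable bound $x^{\theta}D_K^{\delta}$ collapses to $x^{\theta+\delta}$; hence it suffices to produce, for each admissible pair $(m,n)$, a uniform estimate $\Delta_K^m(x)=O_{n,m,\varepsilon}\big(x^{\theta+\varepsilon}D_K^{\delta+\varepsilon}\big)$ with $\theta+\delta<\tfrac12$, after which, choosing $\varepsilon<\tfrac12-(\theta+\delta)$, the bound $o(x^{1/2})$ follows.

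For $mn=1$ the claim is immediate, since then $K=\mathbf Q$, $m=1$ and $\Delta_K^m(x)=O(1)$. The pairs $(m,n)=(2,1)$ and $(3,1)$ likewise have $D_K=1$, so no uniformity is required and the conjecture reduces to the classical $d_2$- and $d_3$-divisor problems over $\mathbf Q$, whose best known error exponents $\tfrac{517}{1648}$ and $\tfrac{43}{96}$ already lie below $\tfrac12$. The substantive part of the range $mn\le 3$ is therefore $(m,n)=(1,2)$ and $(1,3)$, where $D_K$ genuinely varies. Here I would not invoke the generic Lemmas \ref{54} and \ref{55} (which are calibrated for large $mn$), but instead insert into $\mathcal S_K(x,S)$ the strongest available low-dimensional exponential-sum bounds — the Huxley/Bombieri–Iwaniec exponent pairs underlying the quadratic exponent $\tfrac{131}{416}$ and the cubic exponent $\tfrac{43}{96}$ — applied to the two-dimensional sums $\sum_{l,k}\mathbf e\big(mn(xlk/D_K^m)^{1/mn}\big)$, while tracking the conductor $D_K$ through the convexity bound (\ref{convex}) and, in the quadratic case $\zeta_K=\zeta\cdot L(\,\cdot\,,\chi_{D_K})$, through Burgess-type estimates for the associated character sums. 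Since these $x$-exponents are strictly below $\tfrac12$, there is room to absorb a small $D_K$-exponent $\delta$ while keeping $\theta+\delta<\tfrac12$; the delicate sub-range is $D_K$ close to $x$, where one must additionally control the tail of $L(1,\chi)$ and the main-term boundary contribution.

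The hard part — where I expect essentially all the difficulty to concentrate — is $mn\ge 4$, and it should be stressed that here the conjecture asserts strictly more than the Lindel\"of Hypothesis. Indeed, (\ref{lindelof}) gives only $\Delta_K^m(x)=O_\varepsilon\big(x^{1/2+\varepsilon}D_K^\varepsilon\big)$, which under $D_K<x$ is $O_\varepsilon(x^{1/2+\varepsilon})$ and is \emph{not} $o(x^{1/2})$; so the conjecture cannot be reached from any convexity-breaking bound for $\zeta_K$ alone, and one must instead extract genuine cancellation from the short multiple exponential sums $\mathcal S_K(x,S)$ beyond what the size of $\zeta_K$ encodes. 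Theorem \ref{cub} yields only the exponent $\tfrac{2mn-3}{2mn+1}\ge\tfrac59$, while the $\Omega$-result (\ref{omega}) shows the truth lies just below the barrier, at $x^{1/2-1/(2mn)}$ up to logarithmic factors; the target is thus a narrow window of width $\sim 1/(2mn)$ that shrinks as $mn$ grows, and closing it uniformly would require multiple exponential-sum estimates saving a positive power over the square-root barrier, uniformly in $D_K<x$ and for all $mn$ simultaneously — a family of bounds that appears to be beyond present technology. I would accordingly present $mn\le 3$ as a theorem and retain $mn\ge 4$ as the open conjecture, flagging this uniform, beyond-Lindel\"of exponential-sum input as the precise missing ingredient.
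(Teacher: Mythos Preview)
This statement is presented in the paper as an open \emph{conjecture}; there is no proof to compare against. The only partial evidence the paper records is that Theorem~\ref{cubi} yields the case of cubic fields under the much stronger restriction $D_K\le Cx^{1/4-\varepsilon}$, and it remarks (via Theorem~\ref{idealhi}) that even for a single fixed $K$ with $[K:\mathbf Q]\ge 4$ an exponent below $\tfrac12$ is unknown. Your assessment of the range $mn\ge 4$ matches the paper's, including the correct observation that the conjecture is strictly stronger than what Lindel\"of alone gives.

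Where you go beyond the paper is in claiming that $mn\le 3$ can be upgraded to a theorem. The $n=1$ cases are indeed fine since $D_K=1$. For $(m,n)=(1,2)$ and $(1,3)$, however, what you have written is a strategy sketch, not a proof, and there is a concrete obstruction you have not addressed. The secondary error terms in Proposition~\ref{ideal}, namely
\[
x^{\frac{mn-2}{2mn}+\varepsilon}D_K^{\,\frac1n+\varepsilon}R^{\frac{mn-2}{2mn}+\varepsilon}
\quad\text{and}\quad
x^{\frac{mn-1}{mn}+\varepsilon}D_K^{\,\frac1n+\varepsilon}R^{-\frac1{mn}+\varepsilon},
\]
originate from the convexity bound~(\ref{convex}) on the horizontal contours; when $D_K$ is close to $x$ these are already of size at least $x^{1/2}$ for every admissible choice of $R$, so no improvement to $\mathcal S_K(x,S)$ alone can push the total below $x^{1/2}$. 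One would have to rerun the entire contour argument with a subconvex bound in the conductor aspect --- which you allude to via Burgess for $n=2$ but do not carry out, and do not even name for $n=3$, where in general there is no factorisation $\zeta_K=\zeta\cdot L(\,\cdot\,,\chi)$ to exploit. Neither Huxley's nor M\"uller's arguments track the discriminant dependence to the precision you need; saying the $x$-exponents ``leave room'' for a $D_K$-exponent is not the same as exhibiting such a bound. The ``delicate sub-range $D_K$ close to $x$'' you flag is where the argument is genuinely missing, not merely delicate. The paper is accordingly more conservative than your proposal: it claims nothing for $(1,2)$, and for $(1,3)$ only the restricted range $D_K\le Cx^{1/4-\varepsilon}$.
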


If $K$ runs through cubic extension fields with $D_K\le Cx^{\frac14-\varepsilon}$, then this conjecture holds from theorem \ref{cubi}.

From estimate (\ref{omega}), this conjecture may give the best estimate for uniform upper bound of $\Delta_K^m(x)$. As we remarked above (Theorem \ref{idealhi}) this conjecture is very difficult even when $K$ is fixed and $m=1$.

\end{document}